\renewcommand{\subjclassname}{\textup{2000} Mathematics Subject Classification}
\newtheorem{theorem}{Theorem}[section]
\newtheorem{corollary}[theorem]{Corollary}
\theoremstyle{definition}
\newtheorem{example}[theorem]{Example}
\theoremstyle{remark}
\newtheorem{remark}[theorem]{Remark}
\numberwithin{equation}{section}
 \def\R{{\mathbb R}}
 \def\C{{\mathbb C}}
 \def\cL{{\mathcal L}}
 \def\cL{{\mathcal L}}
 \def\mr{{\rm MR}_p (X, D_A, D_B)}
 \def\mr1{{\rm MR}_{p,1} (X, D_A, D_B)}
 \newcommand{\ud}{\mathrm{d}}
\newcommand{\dom}[1]{\mathsf{D}_{#1}}     
\begin{document}
%
%
%
%
%
%
%
%
%

\title[Maximal regularity in interpolation spaces]{Maximal regularity in interpolation spaces for second order Cauchy problems}

\author{Charles J. K. Batty }
\address{St. John's College, University of Oxford, Oxford OX1 3JP, United Kingdom}
\email{charles.batty@sjc.ox.ac.uk}

\author{Ralph Chill}
\address{Technische Universit\"at Dresden,
Institut f\"ur Analysis,
01062 Dresden,
Germany}
\email{ralph.chill@tu-dresden.de}

\author{Sachi Srivastava}

\address{Department of Mathematics,
University of Delhi South Campus,
Benito Juarez Road,
New Delhi 110021, India}
\email{sachi\_srivastava@yahoo.com}


\date{\today}

\renewcommand{\subjclassname}{\textup{2000} Mathematics Subject Classification}

\subjclass{Primary 34G10, 47D09, 35B65; Secondary 35L10, 35K10, 35K90}





\date{\today}


\begin{abstract}
We study maximal regularity in interpolation spaces for the sum of three closed linear operators on a Banach space, and we apply the abstract results to obtain Besov and H\"older maximal regularity for complete second order Cauchy problems under natural parabolicity assumptions. We discuss applications to partial differential equations.
\end{abstract}
 
\renewcommand{\subjclassname}{\textup{2000} Mathematics Subject Classification}

\maketitle

\section{Introduction}
We study  maximal regularity results in certain time interpolation spaces for the second order Cauchy problem
\begin{equation} \label{second}
\begin{split}
& \ddot u + B \dot u + A u = f  \quad \text{in } [0,T] , \\
& u(0) = u_0 , \,\, \dot u (0) = u_1 .
\end{split} 
\end{equation}
Here $A$ and $B$ are closed linear operators defined on a complex Banach space $X$ with domains  $\dom{A}$ and $\dom{B}$ respectively. By a maximal regularity result we mean a result which asserts that for every $f$ in a certain function space $\mathbb{E} \subseteq L^1 (0,T;X)$ and homogeneous initial values $u_0 = u_1 = 0$ the problem \eqref{second} admits a unique strong solution $u$ satisfying $\ddot u$, $B\dot u$, $Au\in \mathbb{E}$. In particular, the three terms on the left-hand side of \eqref{second} have the same regularity as the given right-hand side.\\

The notion of $L^{p}$ maximal regularity (that is, $\mathbb{E} = L^p (0,T;X)$) and, closely connected with it, of maximal regularity in rearrangement invariant Banach function spaces for this abstract second order Cauchy problem has been studied in Chill \& Srivastava \cite{ChSr05, ChSr08} and Chill \& Kr\'ol \cite{ChKr14}.  See also Arendt et al. \cite{ACFP07}, Batty, Chill \& Srivastava \cite{BaChSr08}, Cannarsa, Da Prato \& Zol\'esio \cite{CaDPZo90}, Dautray \& J.-L. Lions \cite[Chapter XVIII, Section 5]{DaLi87VIII}, Favini \cite{Fa91} and Yakoubov \cite{Ya09} for generalisations to non-autonomous problems, Bu \& Fang \cite{BuFa08a,BuFa10}, Keyantuo \& Lizama \cite{KeLi06} for second order problems with periodic boundary conditions, Fern{\'a}ndez, Lizama \& Poblete \cite{FeLiPo10} for third order problems, Zacher \cite{Za05} for Volterra equations and Bu \cite{Bu10,Bu11b}, Keyantuo \& Lizama \cite{KeLi11}, Lizama \& Poblete \cite{LiPb08} for fractional order problems with periodic boundary conditions.\\

The notion of maximal regularity for the second order problem generalises in a natural way the notion of maximal regularity for the first order problem   
\begin{equation} \label{first}
\begin{split}
& \dot u + A u = f \quad \text{in } [0,T] , \\ 
& u(0) = 0 ,
\end{split} 
\end{equation} 
which in turn goes back to the notion of maximal regularity of the sum of two closed linear operators on a Banach space by Da Prato \& Grisvard \cite{DPGr75}; see also the monograph by Lunardi \cite[Theorem 3.18]{Lu09}. In particular, Da Prato and Grisvard showed that if $A$, $D$ are two sectorial operators with domains $\dom{A}$ and $\dom{D}$ and sectoriality angles $\varphi_A$ and $\varphi_D$, respectively, and if $\varphi_A + \varphi_D <\pi$, then for every $x$ in a real interpolation space between $X$ and $\dom{D}$ (or $\dom{A}$) there is a unique solution $y$ of the operator equation $ Ay + D y = x$ lying in the space $ \dom{A} \cap \dom{D} $ with $ Ay$ and $Dy $ belonging to the same interpolation space. This result was then applied to the first order Cauchy problem (\ref{first}) by taking $D$ to be the differentiation operator on, for example, $L^p (0,T;X)$ or $C([0,T];X)$. The real interpolation spaces then include the Besov spaces and the H\"older spaces, respectively, that is, one obtains Besov or 
H\"older maximal regularity, the latter also being called optimal regularity in the literature.\\ 
 
Analogously, maximal regularity of the sum of three operators corresponds with the definition of maximal regularity of the second order problem (\ref{second}) as mentioned above.   In this paper we follow the idea of \cite{DPGr75} to prove a maximal regularity result on interpolation spaces for the second order abstract problem (\ref{second}) for $u_0=u_1=0$.  Here no assumptions are needed on the space $X$, and the operators $A$ and $B$ are not required to satisfy assumptions about functional calculus or $R$-boundedness.  Moreover, we extend the result in such a way that in principle we can also treat initial value problems of the general form (\ref{second}), although here the identification of the associated trace spaces remains an open problem.  The conclusions provide Besov or H\"older maximal regularity of second order Cauchy problems; see also Favini et al. \cite{CFLMM12,FLMM13,FLMTY08}, Mezeghrani \cite{Mz13} for similar results for elliptic problems with inhomogeneous Dirichlet boundary conditions or 
Bu \cite{Bu09,Bu10}, Bu \& Fang \cite{BuFa10}, Keyantuo \& Lizama \cite{KeLi06,KeLi06a}, Poblete \cite{Pb07,Pb09} for second order problems with periodic boundary conditions or on the line. \\
 
The paper is organised as follows. Section \ref{sec0} is of a preliminary nature where we recall relevant definitions and facts. In Section \ref{sec1} an abstract result concerning maximal regularity of certain sums of three closed operators is proven. As an application we obtain our main result on the maximal regularity for the second order Cauchy problem in Section \ref{sec2}. Section \ref{sec.3} is devoted to the initial value problem, while examples of applications can be found in Section \ref{sec.4}.

\section{Preliminaries} \label{sec0}

Let $X$ be a complex Banach space. Whenever $(D,\dom{D})$ is a closed linear operator on $X$, $\theta\in (0,1)$, $1\leq p\leq \infty$, we denote by 
\begin{align*}
 \mathsf{D}_D (\theta ,p) & := (X,\dom{D} )_{\theta ,p} \quad \text{ and} \\
\mathsf{D}_D (\theta ) & := (X,\dom{D} )_{\theta} 
\end{align*}
the real interpolation spaces between $X$ and $\dom{D}$ (the latter space is equipped with the graph norm), as defined by the $K$-method or the trace method. Recall that $\mathsf{D}_D (\theta )$ is a closed subspace of $\mathsf{D}_D (\theta ,\infty )$.

The operator $D$ is called {\em sectorial of angle} $\varphi \in (0,\pi )$ if 
\[
\sigma (D) \subseteq \Sigma_{\varphi} := \{ \lambda\in\C : |{\rm arg}\, \lambda |\leq \varphi \} \cup \{ 0\} ,
\]
and for every $\varphi' \in (\varphi , \pi )$ one has
\[
\sup_{\lambda\not\in \Sigma_{\varphi'}} \| \lambda R(\lambda ,D)\| <\infty .
\]
For a sectorial operator $D$, for $\theta\in (0,1)$ and $1\leq p\leq \infty$ we have the equalities 
\begin{align*}
\mathsf{D}_D (\theta , p) & = \{ x\in X : \| t^\theta D(t+D)^{-1} x\| \in L^p (0,\infty ; \frac{\ud t}{t} ) \} \text{ and} \\
\mathsf{D}_D (\theta ) & = \{ x\in \mathsf{D}_D (\theta, \infty ) : \lim_{t\to\infty} t^{\theta} D(t+D)^{-1} x = 0 \} ,
\end{align*}
and 
\[
\| x\|_{\theta, p} :=  \| x \| +  \| t^\theta D(t+D)^{-1} x\|_{L^p (0,\infty ; \frac{\ud t}{t} )}  
\]
is an equivalent norm on the interpolation space $\mathsf{D}_D (\theta ,p)$ \cite[Propositions 2.2.2 and 2.2.6]{Lu95}.  Note that if $\varphi' \in (\varphi,\pi)$, then $-e^{\pm i\varphi' }D$ is sectorial and its domain is $\dom{D}$ with an equivalent graph norm.  Hence
\begin{equation} \label{lpang}
\mathsf{D}_D (\theta , p) = \{ x\in X : \| t^\theta D R(te^{\pm i \varphi'},D)^{-1} x\| \in L^p (0,\infty ; \frac{\ud t}{t} ).
\end{equation}

\begin{example} \label{d.1}
Let $X$ be a Banach space, and fix $1\leq p < \infty$. If $D_{max}$ is the differentiation operator on $L^p (0,T; X)$ with maximal domain, that is,
\begin{align*}
\dom{D_{max}} & := W^{1,p} (0,T;X) , \\
 D_{max} u & := \dot u  ,
\end{align*}
then we have
\begin{align*}
\mathsf{D}_{D_{max}} (\theta ,q)  = B^{\theta}_{pq} (0,T ; X) \qquad (\theta\in (0,1),\, 1\leq q < \infty ) ,
\end{align*}
and, in particular,
\[
\mathsf{D}_{D_{max}} (\theta ,p) = W^{\theta ,p} (0,T ;X )  \qquad (\theta\in (0,1)) ; 
\]
compare with \cite[Exercise 6, p.18]{Lu09} or \cite[Theorem, p.204]{Tr83}. Here, $B^\theta_{pq}$ and $W^{\theta ,p}$ are the Besov spaces and fractional order Sobolev spaces defined respectively by
\begin{align*}
B^{\theta}_{pq} (0,T ; X) & := \{ u\in L^p (0,T;X) : \int_0^T \!\! \left( \int_0^T \frac{\| u(t) - u(s)\|^p}{|t-s|^{\theta p}} \; ds \right)^{q/p} \!\!\!\!\! dt < \infty \} , \\
W^{\theta ,p} (0,T ;X ) & := \{ u\in L^p (0,T;X) : \int_0^T \int_0^T \frac{\| u(t) - u(s)\|^p}{|t-s|^{\theta p}} \; ds \; dt < \infty \} .
\end{align*}
If $D$ is the restriction of the differentiation operator $D_{max}$ on $L^p (0,T;X)$ to the domain
\begin{align*}
 \dom{D} & = \mathring{W}^{1,p} (0,T;X) := \{ u\in W^{1,p} (0,T;X) : u(0) = 0 \} ,
\end{align*}
then 
\[
 \mathsf{D}_D (\theta ,q) = \begin{cases}
                             \mathring{B}^{\theta}_{pq} (0,T;X) & \text{ if } \theta \geq \frac{1}{p} , \\
                             B^{\theta}_{pq} (0,T ; X) & \text{ if } \theta < \frac{1}{p} ;
                            \end{cases}
\]
compare with \cite[Theorem, p.210]{Tr83}, where actually two-sided homogeneous boundary conditions were considered. Here $\mathring{B}^{\theta}_{pq}$ is the space of all functions $u\in B^{\theta}_{pq}$ with trace $u(0)=0$; note that the trace is well defined whenever $\theta \geq\frac{1}{p}$, since then the Besov space $B^{s}_{pq}$ is embedded into the space of continuous functions. On the other hand, 
\[
 \mathsf{D}_{D_{max}} (\theta ,q) = \mathsf{D}_{D} (\theta ,q) \text{ whenever } \theta <\frac{1}{p} .
\]
Recall that the operator $D$ is sectorial of angle $\frac{\pi}{2}$, while $D_{max}$ is not sectorial (in fact, $\sigma (D_{max}) = \C$). 
\end{example}

\begin{example} \label{d.2}
If $D_{max}$ is the differentiation operator on $C ([0,T] ;X)$ 
with maximal domain, that is,
\begin{align*}
\dom{D_{max}} & := C^1 ([0,T] ; X) , \\
 D_{max} u & := \dot u  ,
\end{align*}
then we have
\begin{align*}
\mathsf{D}_{D_{max}} (\theta ,\infty ) & = C^{\theta} ([0,T] ; X) \text{ and} \\
\mathsf{D}_{D_{max}} (\theta ) & = h^\theta ([0,T] ;X ) ; 
\end{align*}
compare with \cite[Example 1.9 and Exercise 5, p.18]{Lu09}. Here, $C^\theta$ and $h^\theta$ are the H\"older and little H\"older spaces defined respectively by
\begin{align*}
C^\theta ([0,T] ; X) & := \{ u\in C([0,T] ; X) : \sup_{t,s\in [0,T] \atop s\not= t} \frac{\| u(t) - u(s)\|}{|t-s|^\theta} <\infty \} \text{ and} \\
h^\theta ([0,T] ; X) & := \{ u\in C^\theta ([0,T];X) : \lim_{|t-s|\to 0} \frac{\| u(t) - u(s)\|}{|t-s|^\theta} = 0 \} .
\end{align*}
If $D$ is the restriction of the differentiation operator $D_{max}$ to the domain 
\[
 \dom{D}  := \mathring{C}^1 ([0,1] ; X)  := \{ u\in C^1 ([0,T] ;X) : u(0) = 0\} 
\]
(so that $D$ is no longer densely defined), then 
\begin{align*}
 \mathsf{D}_D (\theta ,\infty ) & = \mathring{C}^\theta ([0,T];X) \\
& := \{ u\in C^{\theta} ([0,T];X ) : u(0)=0 \} \text{ and} \\
 \mathsf{D}_D (\theta ) & = \mathring{h}^\theta ([0,T];X) \\
& := \{ u\in h^{\theta} ([0,T];X ) : u(0)=0 \} .
\end{align*}
Also in this example, $D$ is sectorial of angle $\frac{\pi}{2}$ while $\sigma (D_{max}) = \C$.
\end{example}

\section{An abstract theorem} \label{sec1}

Our main abstract result is a maximal regularity result for the sum of three closed, linear, commuting operators.  We say that an operator $A$ {\it commutes} with an invertible operator $D$ if $AD \subseteq DA$, or equivalently if $D^{-1}A \subseteq AD^{-1}$.  Here the compositions such as $AD$ have their natural domains.

\begin{theorem} \label{main}
Let $A$, $B$ and $D$ be three closed, linear operators on a Banach space $X$ with both $ A, B $  commuting with $D$. Assume that
\begin{itemize}
\item[(a)] the operator $D$ is invertible and sectorial of angle $\varphi_1\in (0,\pi )$,
\item[(b)] there exists $\varphi_2 \in (\varphi_1 ,{\pi} )$, such that $H(\lambda ):= (\lambda^2 + \lambda B + A)^{-1}$ exists in $\cL (X)$ for every $\lambda\in\Sigma_{\varphi_2}$,
\item[(c)] $H\in H^\infty (\Sigma_{\varphi_2 } ; \cL (X))$, and
\item[(d)] the functions  
\begin{align*}
\lambda & \mapsto \lambda^2 \, H (\lambda) , \\
\lambda & \mapsto \lambda\, B \, H (\lambda) , \mbox{ and} \\
\lambda & \mapsto A \, H (\lambda)
\end{align*}
are uniformly bounded in $\Sigma_{\varphi_2}$ with values in $\cL (X)$.
\end{itemize}
Then, for every $\theta\in (0,1)$ and every $1\leq p\leq\infty$ the operator $L_{\theta,p}$ on $\mathsf{D}_D (\theta ,p)$ given by
\begin{align*}
\dom{L_{\theta ,p}} & := \{ x \in \dom{D^2} \cap \dom{BD} \cap \dom{A} : D^2 x, \, BD x, \, Ax \in \mathsf{D}_{D} (\theta , p) \} , \\
L_{\theta ,p} x & := D^2 x + BD x + Ax ,
\end{align*}
is closed and boundedly invertible. More precisely, if we define
\begin{equation} \label{S}
Sx := \frac{1}{2\pi i} \int_\Gamma R(\lambda , D) H(\lambda )x \; \ud\lambda , \quad x\in X ,
\end{equation}
where $\Gamma$ is a path connecting $e^{i\varphi'}\infty$ with $e^{-i\varphi'} \infty$ for some $\varphi'\in (\varphi_1 , \varphi_2 )$ and surrounding $\sigma (D)$, then $S\in \cL (X)$, $S$ is a left-inverse of $D^2+BD +A$ in $X$, and for every $x\in \mathsf{D}_D (\theta , p)$ one has
\begin{align*}
& Sx\in \dom{D^2} \cap \dom{BD} \cap \dom{A} , \\
& D^2 Sx , \, BD Sx , \, A Sx \in \mathsf{D}_D (\theta ,p ) \text{ and} \\
& L_{\theta ,p} Sx = (D^2 + BD + A)Sx = x ,
\end{align*}
that is, $S$ restricted to $\mathsf{D}_D (\theta ,p)$ is the bounded inverse of $L_{\theta ,p}$. A similar result holds for $\mathsf{D}_D (\theta )$ instead of  $\mathsf{D}_D (\theta ,p)$.
\end{theorem}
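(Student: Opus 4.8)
The plan is to follow the Da Prato--Grisvard scheme and to show directly that the operator $S$ from \eqref{S} is the asserted inverse. First I would check that $S$ is well defined and bounded on $X$: on a path $\Gamma$ of the indicated type, say $\Gamma=\partial\Sigma_{\varphi'}$ with $\varphi_1<\varphi'<\varphi_2$, sectoriality of $D$ together with $0\in\rho(D)$ gives $\|R(\lambda,D)\|\le C(1+|\lambda|)^{-1}$, while (c) and (d) give $\|H(\lambda)\|\le C(1+|\lambda|)^{-2}$; hence the integrand in \eqref{S} is $O(|\lambda|^{-3})$ at infinity and bounded near $0$, the integral converges absolutely, and Cauchy's theorem shows the value is independent of $\Gamma$. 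I would also record once and for all that, since $D$ commutes with $A$ and $B$, it commutes with $\lambda^2+\lambda B+A$ and hence with $H(\lambda)$; consequently $R(\mu,D)$, $D$ and $B$ all commute with $H(\lambda)$, and $D$ commutes with $A$. These relations are used throughout. (Note also that (b) with $\lambda=0$ forces $A$ to be invertible.)

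Next I would prove that $S$ is a left inverse of $D^2+BD+A$. The algebraic point is the factorisation
\[
D^2+BD+A=(\lambda^2+\lambda B+A)+(D+\lambda+B)(D-\lambda),
\]
an identity of operators on $\dom{D^2}\cap\dom{BD}\cap\dom A$ (here one uses $BD\subseteq DB$, so that this domain is contained in $\dom B\cap\dom A$). For $x$ in this domain, applying $H(\lambda)$, cancelling $H(\lambda)(\lambda^2+\lambda B+A)x=x$, using $R(\lambda,D)(D-\lambda)x=-x$ and the commutation relations, one gets
\[
R(\lambda,D)H(\lambda)(D^2+BD+A)x=R(\lambda,D)x-H(\lambda)(D+\lambda+B)x .
\]
Integrating over $\Gamma$ and deforming the contour --- $H$ being holomorphic on the subsector of $\Sigma_{\varphi_2}$ enclosed by $\Gamma$ and $R(\cdot,D)$ holomorphic in the exterior, with the decay from the previous step --- a contour argument yields $S(D^2+BD+A)x=x$. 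In particular $S$ is a left inverse of $D^2+BD+A$, hence of $L_{\theta,p}$ on $\dom{L_{\theta,p}}$.

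The heart of the proof is the gain of regularity: for $x\in\mathsf{D}_D(\theta,p)$ one must show $Sx\in\dom{D^2}\cap\dom{BD}\cap\dom A$ with $D^2Sx,\,BDSx,\,ASx\in\mathsf{D}_D(\theta,p)$. Using $DR(\lambda,D)=\lambda R(\lambda,D)-I$, the fact that $\int_\Gamma H(\lambda)x\,\ud\lambda=0$ (Cauchy's theorem, since $H$ is holomorphic inside $\Gamma$ with $|\lambda|^{-2}$ decay), and splitting off the $\lambda^{-1}$-part of $R(\lambda,D)$, one derives contour representations such as
\[
DSx=\frac{1}{2\pi i}\int_\Gamma\lambda R(\lambda,D)H(\lambda)x\,\ud\lambda,\qquad
D^2Sx=\frac{1}{2\pi i}\int_\Gamma\lambda\bigl(\lambda R(\lambda,D)-I\bigr)H(\lambda)x\,\ud\lambda
\]
and similarly for $ASx$; the term $BDSx$ is then controlled either by an analogous representation or, once the right-inverse identity below is available, via $BDSx=x-D^2Sx-ASx$. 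It is precisely in establishing convergence of these integrals that the interpolation hypothesis is consumed: along the rays of $\Gamma$ one has, by \eqref{lpang}, $\|DR(\lambda,D)x\|\le C|\lambda|^{-\theta}\|x\|_{\mathsf{D}_D(\theta,\infty)}$, which combined with the decay of $\lambda H(\lambda)$, $AH(\lambda)$, $\lambda BH(\lambda)$ (and their commutation with $DR(\lambda,D)$) turns the borderline $|\lambda|^{-1}$ behaviour into an integrable $|\lambda|^{-1-\theta}$. To see that $D^2Sx$, $BDSx$, $ASx$ actually lie in $\mathsf{D}_D(\theta,p)$ and not merely in $X$, I would test each with $\mu DR(\mu,D)$ for $\mu$ on a ray at an angle in $(\varphi_1,\pi)$, insert the resolvent identity $R(\mu,D)R(\lambda,D)=(R(\mu,D)-R(\lambda,D))/(\lambda-\mu)$ to decouple the two parameters, and estimate the resulting double integral so as to dominate $|\mu|^{\theta}\|DR(\mu,D)(\cdot)\|$ pointwise by a convolution $(k*\psi_x)(|\mu|)$ on $((0,\infty),\ud t/t)$, where $\psi_x$ is the $L^p(\ud t/t)$-function encoding $x\in\mathsf{D}_D(\theta,p)$ via \eqref{lpang} and $k\in L^1((0,\infty),\ud t/t)$ comes from the decay of $H$; Young's inequality then bounds the relevant norm by $\|x\|_{\mathsf{D}_D(\theta,p)}$.

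Finally, $S$ maps $\mathsf{D}_D(\theta,p)$ into $\dom{L_{\theta,p}}$ by the previous paragraph, it is a left inverse of $L_{\theta,p}$ by the second step, and $L_{\theta,p}Sx=(D^2+BD+A)Sx=x$ for $x\in\mathsf{D}_D(\theta,p)$ follows from the contour representations just obtained (which now legitimise moving $D^2+BD+A$ inside the integral), or from density of $\dom{D^2}\cap\dom{BD}\cap\dom A$ in $\mathsf{D}_D(\theta,p)$ together with the left-inverse property and injectivity of $S$ on that space. Hence $L_{\theta,p}$ is boundedly invertible with inverse $S|_{\mathsf{D}_D(\theta,p)}$, and in particular closed. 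The case of $\mathsf{D}_D(\theta)$ follows by the same computations, using its characterisation $\{x\in\mathsf{D}_D(\theta,\infty):\lim_{t\to\infty}t^{\theta}D(t+D)^{-1}x=0\}$, or by noting that $S$ is bounded on $\mathsf{D}_D(\theta,\infty)$ and leaves $\mathsf{D}_D(\theta)$ invariant. The main obstacle is the third step: simultaneously keeping the domains of the three unbounded, pairwise non-commuting operators under control --- only their commutation with $D$ is available --- and upgrading the borderline $|\lambda|^{-1}$ decay both to genuine convergence of the representing integrals and, more delicately, to membership in the interpolation space via the $K$-functional/convolution estimate.
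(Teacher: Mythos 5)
Your proposal follows the same Da Prato--Grisvard strategy as the paper's proof: the same contour integral $S$, the same algebraic factorisation $D^2+BD+A=(\lambda^2+\lambda B+A)+(D+\lambda+B)(D-\lambda)$ behind the left-inverse identity, and the same mechanism for the gain of regularity (commuting $H(\lambda)$ past $DR(\lambda ,D)$, using $\|DR(\lambda ,D)x\|\lesssim |\lambda |^{-\theta}$ from \eqref{lpang}, decoupling with the resolvent identity, and concluding with Young's inequality on the multiplicative group $((0,\infty ),\ud t/t)$). The only cosmetic differences are that the paper regularises the non-absolutely-convergent pieces with factors $s/(s+\lambda )$ and works throughout with $D(t+D)^{-1}Sx$ rather than with free-standing contour representations of $DSx$ and $D^{2}Sx$, but this is the same argument.
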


\begin{proof}
It follows from the assumptions, more precisely from the estimates on $R(\lambda ,D)$ and $H$, that the integral in \eqref{S} converges absolutely and that $S$ is thus a bounded operator on $X$. Note that since $ 0 \notin \sigma(D),$ the path $ \Gamma $ may be chosen so that $ 0 \notin \Gamma$ and lying in the sector $\Sigma_{\varphi_2}$.  Since $A$ and $B$ commute with $D$, the bounded operators $H(\lambda)$ and $R(\lambda,D)$ commute with each other.

Let us first prove that $S$ is a left-inverse of $D^2 +BD+A$. By definition of $S$, for every $x\in \dom{D^2}\cap \dom{BD} \cap \dom{A}$ we can calculate, using the Lebesgue's dominated convergence theorem,
\begin{align*}
\lefteqn{S (D^2 + BD + A)x } \\
& = \frac{1}{2\pi i} \int_\Gamma R(\lambda ,D) H(\lambda ) (D^2 + BD + A) x \; \ud\lambda \\
& = \lim_{s\to +\infty} \frac{1}{2\pi i} \int_\Gamma \frac{s}{s+\lambda} R(\lambda ,D) H(\lambda ) (D^2 - \lambda^2 + B (D-\lambda ) + \lambda^2 + \lambda B + A) x \; \ud\lambda \\
& = - \lim_{s\to +\infty} \frac{1}{2\pi i} \int_\Gamma \frac{s}{s+\lambda} H(\lambda ) (D+\lambda + B)x \; \ud\lambda + \lim_{s\to +\infty} \frac{1}{2\pi i} \int_\Gamma \frac{s}{s+\lambda} R(\lambda ,D) x \; \ud\lambda  \\
& = x .
\end{align*}
In the last step we have used the identities  
\[
\lim_{s\to +\infty} \frac{1}{2\pi i} \int_\Gamma \frac{s}{s+\lambda} R(\lambda ,D) x \; \ud\lambda = x
\]
and 
\[
\frac{1}{2\pi i} \int_\Gamma \frac{s}{s+\lambda} H(\lambda ) (D+\lambda + B)x \; \ud\lambda = 0  \text{ for every } s>0 ,
\] 
which follow from a simple application of Cauchy's residue theorem, remembering that $x \in  \dom{D^2}\cap \dom{BD}$ and the estimate on the function $H$ in assumption (d); compare also with \cite[Proposition 2.1.4 (i)]{Lu95}. 
 
Now, fix $\theta\in (0,1)$ and $1\leq p\leq \infty$. We show that $S$ is a right inverse of $L_{\theta ,p}$ in $\mathsf{D}_D (\theta ,p)$. By definition of $S$, for every $t>0$ and every $x\in X$ we have, by the resolvent identity,
\begin{equation} \label{eq.DS}
\begin{split}
(t+D)^{-1} Sx & = \frac{1}{2\pi i} \int_\Gamma \frac{1}{t+\lambda} (t+D)^{-1} H(\lambda )x \; \ud\lambda + \\
& \phantom{+} + \frac{1}{2\pi i} \int_\Gamma \frac{1}{t+\lambda} R(\lambda ,D) H(\lambda )x \; \ud\lambda \\
& = \frac{1}{2\pi i} \int_\Gamma \frac{1}{t+\lambda} R(\lambda ,D) H(\lambda )x \; \ud\lambda \\
& = \frac{1}{2\pi i} \int_\Gamma \frac{1}{t+\lambda} H(\lambda ) R(\lambda ,D)x \; \ud\lambda.
\end{split}
\end{equation}
For every $t >0$ and every $x\in\mathsf{D}_D (\theta ,p)$, we have
\begin{align*}
g_1(r) &:= \frac{(re^{\pm i \varphi'})^{1-\theta}}{t+re^{\pm i \varphi'}} \in L^q(0,\infty; \frac{dr}{r}), \qquad 1 \le q \le \infty, \\
g_2(r) &:= \|r^\theta R(re^{\pm i \varphi'}, D)x\| \in L^p(0,\infty; \frac{dr}{r}),
\end{align*}
by \eqref{lpang}, and $\|AH(re^{\pm i \varphi'})\|$ is bounded by assumption (d).  By H\"older's inequality,  the integral
\begin{equation*}
\frac{1}{2\pi i} \int_\Gamma \frac{1}{t+\lambda} AH(\lambda )  \, DR(\lambda ,D) x \; \ud\lambda = \frac{1}{2\pi i} \int_\Gamma \frac{\lambda^{1-\theta}}{t+\lambda} AH(\lambda ) \, \lambda^\theta DR(\lambda ,D)x \; \frac{\ud\lambda}{\lambda}
\end{equation*}
converges absolutely. Since $A$ is closed,  we conclude from this and \eqref{eq.DS} that, for every $x\in \mathsf{D}_D (\theta ,p)$,  $y:=D(t+D)^{-1}Sx \in \dom{A}$.  Since $A$ commutes with $D$, $Sx = tD^{-1}y + y \in \dom{A}$ and 
\[
D(t+D)^{-1} ASx = AD(t+D)^{-1}Sx = \frac{1}{2\pi i} \int_\Gamma \frac{1}{t+\lambda} AH(\lambda ) DR(\lambda ,D) x \; \ud\lambda. 
\]
Let
\[
g_3(r) = \frac{r^\theta}{|\cos \varphi'|+r} \in L^1(o,\infty; \frac{dr}{r}).
\]
By assumption (d), we can estimate 
\begin{align*}
\|t^\theta D(t+D)^{-1} ASx\| & \leq \frac{C}{\pi} \int_0^\infty g_3(t/r) g_2(r) \; \frac{\ud r}{r} ,
\end{align*}
It follows from Young's inequality (applied to the multiplicative group $(0,\infty)$ with the Haar measure $\frac{1}{t}\ud t$) that 
\[
\| t^\theta D(t+D)^{-1} ASx \| \in L^p (0,\infty ; \frac{\ud t}{t} )
\]
as well. This proves that $ASx\in D_D (\theta ,p)$.

Similarly, we deduce that for every $x\in \mathsf{D}_D (\theta ,p)$ one has $ Sx \in \dom{BD} $ and 
\begin{align*}
D(t+D)^{-1} BDSx & = \lim_{s\to +\infty} \frac{1}{2\pi i} \int_\Gamma \frac{s}{s+\lambda} \frac{1}{t+\lambda} BD H(\lambda ) \, DR(\lambda ,D) x \; \ud\lambda \\
& = \lim_{s\to +\infty} \frac{1}{2\pi i} \int_\Gamma \frac{s}{s+\lambda} \frac{1}{t+\lambda} \, \lambda B H(\lambda ) \, DR(\lambda ,D)x \; \ud\lambda \\
& \phantom{=} - \lim_{s\to +\infty} \frac{1}{2\pi i} \int_\Gamma \frac{s}{s+\lambda} \frac{1}{t+\lambda} BD H(\lambda ) x \; \ud\lambda \\
& = \int_\Gamma \frac{1}{t+\lambda} \, \lambda B H(\lambda ) \, DR(\lambda ,D)x \; \ud\lambda  .
\end{align*}
This computation is also justified by \eqref{eq.DS} (for the first equality), by Cauchy's theorem and since the integral on the right-hand side converges absolutely for every $x\in \mathsf{D}_D (\theta, p)$, using again assumption (d). One can proceed similarly as above and one obtains $BD Sx\in \mathsf{D}_D (\theta ,p)$. Similar arguments prove that $ Sx \in \dom{D^{2}} $ and  $D^{2}Sx \in \mathsf{D}_D (\theta ,p).$  Indeed, by Cauchy's theorem,
\begin{align*}
D(t + D )^{-1} D^{2} Sx  &= \lim_{s\to +\infty} \frac{1}{2\pi i} \int_\Gamma \frac{s}{s+\lambda} \frac{1}{t+\lambda} D^{2}H(\lambda ) D R(\lambda ,D) x \; \ud\lambda \\ 
& = \lim_{s\to +\infty} \frac{1}{2\pi i} \int_\Gamma \frac{s}{s+\lambda} \frac{1}{t+\lambda} D^{2}H(\lambda )  x \; \ud\lambda
+ \\
& ~~~~ +\lim_{s\to +\infty} \frac{1}{2\pi i} \int_\Gamma \frac{s}{s+\lambda} \frac{1}{t+\lambda}\lambda  D H(\lambda ) x \; \ud\lambda\\
& ~~~~- \lim_{s\to +\infty} \frac{1}{2\pi i} \int_\Gamma \frac{s}{s+\lambda} \frac{1}{t+\lambda} \lambda ^{2} D H(\lambda ) R(\lambda ,D) x \; \ud\lambda \\
& = \frac{1}{2\pi i} \int_\Gamma \frac{1}{t+\lambda} \lambda ^{2} H(\lambda ) \, DR(\lambda ,D) x \; \ud\lambda.
\end{align*} 
Assumption (d) allows us to proceed as in the previous case.
  
From what we have proved above, it follows that the operator $S$ leaves $\mathsf{D}_D (\theta ,p)$ invariant. By the closed graph theorem, the restriction of $S$ to $\mathsf{D}_D (\theta ,p)$ is bounded. Moreover, the above equalities show that $ SL_{\theta ,p} x = x $ for all $ x \in \dom{L_{\theta ,p}} $ and $ L_{\theta ,p} S y = y $ for all $ y \in \mathsf{D}_{D}(\theta, p ).$ Thus $ L_{\theta ,p} : \dom{L_{\theta ,p}} \to \mathsf{D}_{D}(\theta, p ) $ is boundedly invertible (and necessarily closed) with $ L_{\theta ,p}^{-1} = S|_{\mathsf{D}_D (\theta ,p)}$.     
\end{proof}

\section{Maximal regularity of the second order Cauchy problem in interpolation spaces} \label{sec2}

In this section we consider the second order Cauchy problem with homogeneous initial data:
\begin{equation} \label{second.hom}
\begin{split}
& \ddot u + B \dot u + A u = f  \quad \text{in } [0,T] , \\
& u(0) = \dot u (0) = 0.
\end{split} 
\end{equation} 
 
\begin{theorem} \label{main2}
Let $A$ and $B$ be two closed linear operators on a Banach space $X$. Assume that
\begin{itemize}
\item[(b)] there exists $\varphi \in (\frac{\pi}{2} ,{\pi} )$, such that $H(\lambda ):= (\lambda^2 + \lambda B + A)^{-1}$ exists in $\cL (X)$ for every $\lambda\in\Sigma_{\varphi}$,
\item[(c)] $H\in H^\infty (\Sigma_{\varphi } ; \cL (X))$, and
\item[(d)] the functions  
\begin{align*}
\lambda & \mapsto \lambda^2 \, H (\lambda) , \\
\lambda & \mapsto \lambda\, B \, H (\lambda) , \mbox{ and} \\
\lambda & \mapsto A \, H (\lambda)
\end{align*}
are uniformly bounded in $\Sigma_{\varphi}$ with values in $\cL (X)$.
\end{itemize}
Then the problem \eqref{second.hom} has Besov and H\"older regularity in the following sense : for every $f\in B^\theta_{pq} (0,T;X),  \theta < \frac{1}{p}$
(resp. $f\in \mathring{B}^{\theta}_{pq} (0,T;X)$ if $\theta \geq \frac{1}{p}$, $f\in \mathring{C}^\theta ([0,T];X)$, $f\in \mathring{h}^\theta ([0,T];X)$) the problem \eqref{second.hom} admits a unique solution $u$ satisfying 
\begin{multline*}
u, \, \dot u , \, \ddot u , \, B\dot u , \, Au \in B^\theta_{pq} (0,T;X) \quad \\
(\text{resp. } \in \mathring{B}^{\theta}_{pq} (0,T;X),\, \mathring{C}^\theta ([0,T];X),\, \mathring{h}^\theta ([0,T];X) ) .
\end{multline*}
\end{theorem}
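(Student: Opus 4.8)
The plan is to derive Theorem~\ref{main2} from the abstract Theorem~\ref{main} by choosing $D$ to be a suitable realisation of the first derivative on the relevant function space over $[0,T]$, and then to unwind the operator equation $D^2u + BDu + Au = f$ into the Cauchy problem \eqref{second.hom}. Concretely, on $L^p(0,T;X)$ I would take $D$ to be the derivative with domain $\mathring{W}^{1,p}(0,T;X) = \{u \in W^{1,p}(0,T;X) : u(0)=0\}$, and on $C([0,T];X)$ the derivative with domain $\mathring{C}^1([0,T];X) = \{u \in C^1([0,T];X): u(0)=0\}$; by Examples~\ref{d.1} and~\ref{d.2} this $D$ is sectorial of angle $\frac{\pi}{2}$. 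The first obstacle is that $D$ as just defined is \emph{not} invertible on $[0,T]$ (its spectrum is empty but $0$ is not in the resolvent set in the unbounded-interval sense needed; more precisely invertibility of $D$ requires care since $D^{-1}$ is the Volterra integration operator, which \emph{is} bounded, so in fact $0 \in \rho(D)$). So I should check: $D^{-1}f(t) = \int_0^t f(s)\,ds$ is bounded on each of the spaces in question and maps into the domain with vanishing trace, hence $D$ is indeed invertible, and assumption~(a) of Theorem~\ref{main} holds with $\varphi_1 = \frac{\pi}{2}$. Assumptions~(b), (c), (d) of Theorem~\ref{main} are exactly (b), (c), (d) of Theorem~\ref{main2} with $\varphi_2 = \varphi > \frac{\pi}{2} = \varphi_1$, and $A$, $B$ act only in the $X$-variable so they commute with $D$ (which acts only in time). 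Thus Theorem~\ref{main} applies and yields, for each $\theta \in (0,1)$ and $1 \le p \le \infty$ (and also for the $h^\theta$-type space via the $\mathsf{D}_D(\theta)$ version), a bounded operator $S = L_{\theta,p}^{-1}$ on $\mathsf{D}_D(\theta,p)$.

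Next I would translate the interpolation spaces. By Example~\ref{d.1}, for the $L^p$-based $D$ with vanishing trace we have $\mathsf{D}_D(\theta,q) = \mathring{B}^\theta_{pq}(0,T;X)$ when $\theta \ge \frac1p$ and $= B^\theta_{pq}(0,T;X)$ when $\theta < \frac1p$; by Example~\ref{d.2}, for the $C$-based $D$ we get $\mathsf{D}_D(\theta,\infty) = \mathring{C}^\theta([0,T];X)$ and $\mathsf{D}_D(\theta) = \mathring{h}^\theta([0,T];X)$. This gives the regularity of $f$ and, via Theorem~\ref{main}, the statements $D^2u, BDu, Au \in \mathsf{D}_D(\theta,p)$ with $u := Sf$, i.e. $\ddot u, B\dot u, Au$ lie in the asserted space. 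I would then need the additional regularity $u, \dot u \in$ same space: since $u = D^{-2}(D^2 u)$ and $\dot u = D^{-1}(D^2 u)$ with $D^2u$ in the interpolation space, and since $D^{-1}$ (Volterra integration) maps $\mathsf{D}_D(\theta,p)$ boundedly into itself — indeed into $\mathsf{D}_{D^2}$-type spaces which embed back in — this follows; alternatively, $u \in \dom{D^2} \subseteq \dom{D}$ with $Du = \dot u$, and one checks $\dot u, u$ inherit membership because $D^{-1}$ preserves the Besov/Hölder scale (this is where I would invoke boundedness of integration on $B^\theta_{pq}$, $C^\theta$, $h^\theta$, a standard fact, or simply note $u \in \dom{D}$ and $Du \in \mathsf{D}_D(\theta,p) \subseteq X$ forces $u \in \mathsf{D}_D(\theta+1,p)$ which embeds into $\mathsf{D}_D(\theta,p)$).

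Finally I would verify that the operator equation $D^2 u + BDu + Au = f$ in the function space is genuinely equivalent to the Cauchy problem \eqref{second.hom}. Since $u \in \dom{D^2}$ means $u \in W^{2,p}$ (resp. $C^2$) with $u(0) = 0$ and $\dot u(0) = 0$ — the double vanishing trace being built into $\dom{D}$ applied twice — the identity $D^2u = \ddot u$ holds pointwise a.e.\ (resp.\ everywhere), and $Du = \dot u$; hence $u$ solves \eqref{second.hom}. Conversely any strong solution of \eqref{second.hom} with the stated regularity lies in $\dom{L_{\theta,p}}$ and satisfies $L_{\theta,p}u = f$, so uniqueness follows from bounded invertibility of $L_{\theta,p}$. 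The main technical obstacle, I expect, is the bookkeeping around the trace conditions: one must be careful that $\dom{D}$ (one vanishing trace on $u$) composed with itself gives precisely the two conditions $u(0) = \dot u(0) = 0$ and no more, and that in the regime $\theta < \frac1p$, where $B^\theta_{pq}$ does not see a trace, the identification $\mathsf{D}_{D_{max}}(\theta,q) = \mathsf{D}_D(\theta,q)$ from Example~\ref{d.1} is used to phrase the hypothesis on $f$ without a boundary condition while still producing a solution with the built-in homogeneous initial data; this matching of function-space hypotheses to the abstract framework is the delicate point, everything else being a direct appeal to Theorem~\ref{main} and the two Examples.
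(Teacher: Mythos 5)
Your proposal is correct and follows essentially the same route as the paper: realise $D$ as the time derivative with vanishing trace on $L^p(0,T;X)$ (resp.\ $C([0,T];X)$), verify hypotheses (a)--(d) of Theorem~\ref{main} for the induced multiplication operators, and translate $\mathsf{D}_D(\theta,p)$ via Examples~\ref{d.1} and~\ref{d.2}. The additional details you supply (invertibility of $D$ via the Volterra operator, the regularity of $u$ and $\dot u$ through boundedness of $D^{-1}$ on the interpolation scale, and the trace bookkeeping) are exactly what the paper leaves implicit in its one-line appeal to Theorem~\ref{main}.
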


\begin{proof}
Let the operators $\bar{A}$, $\bar{B}$ and $D$ defined on the space $L^p (0,T;X)$ (resp. $C ([0,T] ;X)$) be given as follows:
\begin{align*}
(\bar{A} u) (t) & := A u(t) , \\
(\bar{B} u) (t) & := B u(t) , \\
(Du) (t) & := \dot u(t) ,
\end{align*}
Here the multiplication operators $\bar{A}$ and $\bar{B}$ have their natural domains, that is, 
\[
\dom{\bar{A}} = L^p(0,T; \dom{A} )
\] 
and similarly for $ \dom{\bar{B}} $, and 
\begin{align*}
\dom{D} & := \mathring{W}^{1,p} (0,T;X) \\
\text{(resp. } \dom{D} & := \mathring{C}^1 ([0,T];X) \,\, \text{)} ; 
\end{align*}
compare with Examples \ref{d.1} and \ref{d.2}. Recall that  $D$ is sectorial of angle $\frac{\pi}{2}$. Applying Theorem \ref{main} to the above operators and noting that (compare with Examples \ref{d.1} and \ref{d.2})
\begin{align*}
\mathsf{D}_{D} (\theta, p ) & = B^{\theta}_{p q}( 0, T; X) \quad \text{if } \theta <\frac{1}{p} , \\
(\text{resp. } \mathsf{D}_{D} (\theta, p ) & = \mathring{B}^{\theta}_{p q}( 0, T; X) \quad \text{if } \theta \geq \frac{1}{p} , \\
\mathsf{D}_{D}(\theta, p) & = \mathring{C}^{\theta}([0,T];X) , \\
\mathsf{D}_{D} (\theta ) & = \mathring{h}^\theta ([0,T] ;X) \,\, ) , 
\end{align*}
we obtain the required maximal regularity.  
\end{proof}

The fractional power $A^\varepsilon$ in the following corollary is defined by using any standard functional calculus for sectorial operators; see, for example, \cite{Hs06}.

\begin{corollary} \label{epsilon.1}
Consider the abstract second order Cauchy problem 
\begin{equation} \label{eq.1}
\begin{array}{ll}
\ddot{u}(t) + \alpha A^{\varepsilon}  \dot{u}(t)+ Au(t) = f(t), & t \in [0, T],\\[2mm]
u(0) = \dot{u}(0) = 0,
\end{array}
\end{equation}
where $ A $ is a sectorial operator of angle $\varphi\in (0,\pi )$ on a Banach space $X,$ and $ \varepsilon > 0$, $\alpha > 0.$  Assume that one of the following conditions holds: 
\begin{itemize}
\item[(a)] $\varepsilon = \frac{1}{2}$, $\alpha \geq 2$, and $\varphi\in (0,\pi)$.
\item[(b)] $\varepsilon = \frac{1}{2}$, $\alpha\in (0,2)$, and $\varphi\in (0, \pi - 2 \arctan \frac{\sqrt{4-\alpha^2}}{\alpha} )$.
\item[(c)] $\varepsilon \in (\frac{1}{2} ,1]$, $\alpha >0$ and $\varphi\in (0,\frac{\pi}{2\varepsilon})$.
\end{itemize}
Then the problem \eqref{eq.1} has Besov and H\"older maximal regularity in the sense of Theorem \ref{main2} above.
\end{corollary}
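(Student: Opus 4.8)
The plan is to verify that, under each of the three sets of conditions (a)--(c), the operators $A$ (sectorial of angle $\varphi$) and $B := \alpha A^\varepsilon$ satisfy hypotheses (b), (c) and (d) of Theorem~\ref{main2}, and then invoke that theorem directly. Since $B$ is a function of $A$, everything reduces to estimates on the scalar symbol. Writing $A$ via a functional calculus, for $\lambda \in \Sigma_\varphi$ and $\mu \in \sigma(A) \subseteq \Sigma_\varphi$ one has the symbol
\[
h(\lambda, \mu) = \lambda^2 + \alpha \lambda \mu^\varepsilon + \mu ,
\]
and $H(\lambda) = (\lambda^2 + \alpha \lambda A^\varepsilon + A)^{-1}$ will exist in $\cL(X)$, with the required uniform bounds on $H$, $\lambda^2 H$, $\lambda B H = \alpha\lambda A^\varepsilon H$ and $AH$, provided that
\[
\inf_{\lambda \in \Sigma_\varphi,\ \mu \in \Sigma_{\varphi'}} \frac{|\mu^\varepsilon||\lambda|^2 + |\mu| + |\lambda|^2|\mu^\varepsilon|}{|\lambda^2 + \alpha\lambda\mu^\varepsilon + \mu|}
\]
is finite, where $\varphi'$ is any angle slightly larger than $\varphi$ (sectoriality of $A$ gives control on a slightly larger sector, and $A^\varepsilon$ is then sectorial of angle $\varepsilon\varphi'$, with $\|\mu A^\varepsilon(\mu + A^\varepsilon)^{-1}\|$ bounded, etc.). The standard way to run this for functions of a single sectorial operator is: (i) prove the scalar estimate $|h(\lambda,\mu)| \gtrsim |\lambda|^2 + |\mu| + |\lambda|^2|\mu|^\varepsilon$ uniformly on the relevant sectors; (ii) deduce boundedness of $\mu \mapsto (\text{each numerator term})/h(\lambda,\mu)$ as a bounded holomorphic function of $\mu$, uniformly in $\lambda$; (iii) apply the holomorphic functional calculus for the sectorial operator $A$ to turn these scalar bounds into the $\cL(X)$-bounds of hypotheses (c) and (d). Step (iii) is routine once (i) is in hand.

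So the whole corollary comes down to the scalar inequality in step (i), and that is where the three cases split. The cleanest route is to homogenize: since $h(\lambda,\mu)$ is not homogeneous, one substitutes and reduces to a one-variable question. With $\varepsilon = 1/2$ the symbol factors, $\lambda^2 + \alpha\lambda\mu^{1/2} + \mu = (\lambda + r_+ \mu^{1/2})(\lambda + r_- \mu^{1/2})$ where $r_\pm = \tfrac{\alpha \pm \sqrt{\alpha^2 - 4}}{2}$ are the roots of $r^2 - \alpha r + 1 = 0$. For $\alpha \ge 2$ (case (a)) the roots $r_\pm$ are real and positive, so for $\lambda \in \Sigma_\varphi$, $\varphi < \pi$, and $\mu^{1/2} \in \Sigma_{\varphi/2} \subseteq \Sigma_{\pi/2}$, each factor $\lambda + r_\pm \mu^{1/2}$ stays away from zero in the appropriate quantitative sense (both $\lambda$ and $r_\pm\mu^{1/2}$ lie in a common half-plane), which gives the lower bound on $|h|$ with the right powers; this is why no angle restriction is needed. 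For $\alpha \in (0,2)$ (case (b)), $r_\pm = e^{\pm i\beta}$ with $\beta = \arctan\frac{\sqrt{4-\alpha^2}}{\alpha} \in (0,\pi/2)$, and the factor $\lambda + r_\pm\mu^{1/2}$ can vanish only if $\arg\lambda$ and $\arg(r_\pm\mu^{1/2}) = \pm\beta + \tfrac12\arg\mu$ differ by $\pi$; bounding $|\arg\lambda| \le \varphi$ and $|\arg\mu| \le \varphi$ (so $|\tfrac12\arg\mu| \le \varphi/2$), the worst case needs $\varphi + \beta + \varphi/2 < \pi$, i.e. $\tfrac{3\varphi}{2} < \pi - \beta$ — and one checks this is exactly equivalent to $\varphi < \pi - 2\arctan\frac{\sqrt{4-\alpha^2}}{\alpha}$ after the bookkeeping (the factor $3/2$ versus the stated bound will need a careful argument, possibly using that $\mu^{1/2}$ ranges only over $\Sigma_{\varphi/2}$ and a sharper separation estimate; this is the point I expect to have to be most careful about). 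For case (c), with $\varepsilon \in (\tfrac12, 1]$, the symbol no longer factors nicely, but on $\Sigma_\varphi$ with $\varphi < \pi/(2\varepsilon)$ one has $\mu^\varepsilon \in \Sigma_{\varepsilon\varphi}$ with $\varepsilon\varphi < \pi/2$, so $\mathrm{Re}(\mu^\varepsilon) > 0$ and hence $\mathrm{Re}(\alpha\lambda\mu^\varepsilon/\lambda^2)$-type positivity can be exploited, or more simply one argues that the three terms $\lambda^2$, $\alpha\lambda\mu^\varepsilon$, $\mu$ cannot cancel because, after dividing through, $\lambda\mu^\varepsilon$ and $\mu/\lambda$ and $\lambda^2/\text{something}$ have arguments that cannot all be simultaneously near $\pm\pi$; the condition $\varepsilon > 1/2$ ensures the "damping" term $\lambda\mu^\varepsilon$ dominates the natural "resonance" locus $\lambda^2 \approx -\mu$ (where $|\lambda| \approx |\mu|^{1/2}$, so $|\lambda\mu^\varepsilon| \approx |\mu|^{1/2+\varepsilon} \gg |\mu|$), which is precisely the known phenomenon that makes $\varepsilon = 1/2$ the critical exponent.

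The main obstacle is therefore the uniform scalar lower bound $|\lambda^2 + \alpha\lambda\mu^\varepsilon + \mu| \gtrsim |\lambda|^2 + |\lambda|^2|\mu|^\varepsilon + |\mu|$ on the two sectors — in particular getting the angle thresholds in (b) and (c) sharp — and not the functional-calculus transfer, which is standard. I would organize the proof as: first state the scalar lemma with the bound above valid on $\Sigma_\varphi \times \Sigma_{\varphi'}$ under conditions (a)/(b)/(c) (for a slightly enlarged $\varphi'$, absorbing the loss into the strict inequalities), proving it by the factorization argument for $\varepsilon = 1/2$ and by an argument-counting / positivity argument for $\varepsilon \in (\tfrac12,1]$; then note that $B = \alpha A^\varepsilon$ is defined by the functional calculus and commutes with $A$, that $H(\lambda)$ is given by $f_\lambda(A)$ for the bounded holomorphic function $f_\lambda(\mu) = h(\lambda,\mu)^{-1}$, and that the scalar bounds make $f_\lambda$, $\lambda^2 f_\lambda$, $\alpha\lambda\mu^\varepsilon f_\lambda$ and $\mu f_\lambda$ uniformly bounded holomorphic functions on $\Sigma_{\varphi'}$, so the functional calculus delivers exactly hypotheses (b), (c), (d) of Theorem~\ref{main2} with the angle $\varphi$ (chosen in $(\tfrac\pi2,\pi)$, which is compatible with all three cases after shrinking $\varphi$ harmlessly if necessary); finally apply Theorem~\ref{main2}.
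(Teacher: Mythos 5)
Your overall architecture is the one the paper intends: the paper's own proof of Corollary \ref{epsilon.1} is a single sentence invoking Theorem \ref{main2} together with the \emph{proof} of \cite[Theorem 4.1]{ChSr05}, and it is precisely in that cited proof that the lower bound for the symbol $h(\lambda,\mu)=\lambda^2+\alpha\lambda\mu^\varepsilon+\mu$ on a product of sectors is established and transferred to the operator bounds of hypotheses (b)--(d). So reducing everything to a scalar estimate plus a functional-calculus transfer is exactly the right route. The execution, however, has concrete gaps at the very point you correctly identify as the whole content of the corollary. First, the target inequality is misstated: you need the \emph{supremum} of those quotients to be finite, and the term controlling $\lambda BH(\lambda)=\alpha\lambda A^\varepsilon H(\lambda)$ is $|\lambda|\,|\mu|^\varepsilon$, not $|\lambda|^2|\mu|^\varepsilon$; with your exponent the bound is simply false (take $\lambda=t$, $\mu=t^2$ real and $t\to\infty$: then $|h|\asymp t^2+t^{1+2\varepsilon}$ while $|\lambda|^2|\mu|^\varepsilon=t^{2+2\varepsilon}$). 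The correct target is $|h(\lambda,\mu)|\gtrsim|\lambda|^2+|\lambda|\,|\mu|^\varepsilon+|\mu|$.

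Second, and this is the gap you flagged but did not close: in case (b) you let $\lambda$ range over $\Sigma_\varphi$, which is the wrong sector. Hypothesis (b) of Theorem \ref{main2} requires the bounds for $\lambda$ in a sector $\Sigma_{\varphi_2}$ with $\varphi_2>\pi/2$; that angle is dictated by the sectoriality angle $\pi/2$ of the time derivative $D$, not by the angle of $A$ --- only $\mu$ ranges over a slight enlargement $\Sigma_{\varphi'}$ of $\Sigma_\varphi$. With $|\arg\lambda|\le\varphi_2$ and $|\arg(e^{\pm i\beta}\mu^{1/2})|\le\beta+\varphi'/2$, the non-cancellation condition for each factor is $\varphi_2+\beta+\varphi'/2<\pi$, and letting $\varphi_2\downarrow\pi/2$, $\varphi'\downarrow\varphi$ this is exactly $\varphi<\pi-2\beta=\pi-2\arctan\frac{\sqrt{4-\alpha^2}}{\alpha}$; there is no factor $3/2$. (The same correction underlies case (a): ``both factors lie in a common half-plane'' is not literally true since $\varphi_2>\pi/2$; what you use is $\varphi_2+\varphi'/2<\pi$, attainable because $\varphi<\pi$.) Finally, case (c) remains a heuristic, and the heuristic is unreliable where it matters most: at the resonance $\lambda^2=-\mu$ with $|\lambda|=|\mu|^{1/2}$ \emph{small}, the damping term has modulus $\alpha|\mu|^{1/2+\varepsilon}=o(|\mu|)$ precisely because $\varepsilon>1/2$, so the desired bound $|h|\gtrsim|\mu|$ degenerates near the origin. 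One must use that $A$ is invertible (which is in any case forced by hypothesis (b) of Theorem \ref{main2} at $\lambda=0$, since $H(0)=A^{-1}$) to keep $\mu$ away from $0$, and then carry out a genuine case analysis in the ratio $|\lambda|^2/|\mu|$ for the remaining regimes. As written, your proposal essentially establishes case (a), nearly establishes case (b), and does not establish case (c).
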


\begin{proof}  
This follows directly from Theorem \ref{main2} and the proof of \cite[Theorem 4.1]{ChSr05}.
\end{proof} 

There is an important difference between Corollary \ref{epsilon.1} and \cite[Theorem 4.1]{ChSr05}, which is cited in the proof above and which asserts $L^p$-maximal regularity of the problem \eqref{eq.1}. Compared to \cite[Theorem 4.1]{ChSr05}, Corollary \ref{epsilon.1} contains no further assumptions on the Banach space $X$ and the operator $A$. In particular, $X$ need not be a UMD space and $A$ need not have a bounded $RH^\infty$-functional calculus. The above result applies in general Banach spaces and for general sectorial operators.  However, the conclusion of Corollary \ref{epsilon.1} is not $L^p$-maximal regularity, but rather Besov and H\"older maximal regularity.

\section{The initial value problem} \label{sec.3}

In this section we solve the abstract second order Cauchy problem with initial values in certain trace spaces. Before turning to the Cauchy problem, however, we formulate an abstract theorem in the spirit of Theorem \ref{main}. 

\begin{theorem} \label{thm.initial}
Take the assumptions of Theorem \ref{main}, fix $\theta\in (0,1)$, $1\leq p\leq \infty$, and let $L_{\theta ,p}$  be the operator on $\mathsf{D}_D (\theta ,p)$ as defined in Theorem \ref{main}. Let $\hat{D}$ be a closed extension of $D$, and let $\hat{L}_{\theta ,p}$ be the operator on $\mathsf{D}_{\hat{D}} (\theta ,p)$ given by 
\begin{align*}
 \dom{\hat{L}_{\theta ,p}} & := \{ x \in \dom{\hat{D}^2} \cap \dom{B\hat{D}} \cap \dom{A} : \hat{D}^2 x, \, B\hat{D} x, \, Ax \in \mathsf{D}_{\hat{D}} (\theta , p) \} ,\\
\hat{L}_{\theta ,p} x & := \hat{D}^2 x + B\hat{D} x + Ax . 
\end{align*}
Then, for every $f\in \mathsf{D}_{\hat{D}} (\theta ,p)$ and every $x_0 \in \dom{\hat{L}_{\theta ,p}}$ satisfying the compatibility condition $f - \hat{L}_{\theta ,p}  x_0 \in\mathsf{D}_D (\theta ,p)$ there exists a unique solution $x\in \dom{\hat{L}_{\theta ,p}}$ of the problem
\begin{equation} \label{eq.initial}
\begin{split}
& \hat{D}^2 x + B\hat{D} x + Ax = f , \\ 
& x - x_0 \in \dom{L_{\theta,p}} .
\end{split}
\end{equation}
\end{theorem}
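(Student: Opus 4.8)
The plan is to reduce \eqref{eq.initial} to the bounded invertibility of $L_{\theta,p}$ established in Theorem \ref{main}, by an affine change of unknown; the whole argument is the familiar decomposition ``particular solution $+$ solution of the homogeneous equation''.

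First I would set up the bookkeeping relating $\hat D$ to $D$. Since $\hat D$ extends $D$, for $u\in\dom D$ we have $\hat D u = Du$, so the graph norms of $\dom D$ and $\dom{\hat D}$ agree on $\dom D$; hence $\dom D$ embeds continuously (indeed isometrically) into $\dom{\hat D}$, and by functoriality of the real interpolation method this yields a continuous embedding $\mathsf{D}_D(\theta,p)\hookrightarrow\mathsf{D}_{\hat D}(\theta,p)$. Next, for $y\in\dom{L_{\theta,p}}$ one has $y,\,Dy\in\dom D\subseteq\dom{\hat D}$, whence $\hat D^2 y = D^2 y$ and $B\hat D y = BD y$; combined with the embedding of the interpolation spaces (applied to $D^2y,BDy,Ay$) this shows $\dom{L_{\theta,p}}\subseteq\dom{\hat L_{\theta,p}}$ and $\hat L_{\theta,p}y = L_{\theta,p}y$ for every $y\in\dom{L_{\theta,p}}$. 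Finally, $\dom{\hat L_{\theta,p}}$ is a linear subspace, being cut out of the subspace $\dom{\hat D^2}\cap\dom{B\hat D}\cap\dom A$ by linear conditions.

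For existence, put $g := f - \hat L_{\theta,p}x_0$; the compatibility hypothesis says $g\in\mathsf{D}_D(\theta,p)$, so Theorem \ref{main} gives $y := Sg\in\dom{L_{\theta,p}}$ with $L_{\theta,p}y = g$. Set $x := x_0 + y$. Then $x-x_0 = y\in\dom{L_{\theta,p}}$; since $x_0$ and $y$ both lie in $\dom{\hat L_{\theta,p}}$ and this set is a subspace, $x\in\dom{\hat L_{\theta,p}}$; and using the consistency identity, $\hat L_{\theta,p}x = \hat L_{\theta,p}x_0 + \hat L_{\theta,p}y = \hat L_{\theta,p}x_0 + L_{\theta,p}y = \hat L_{\theta,p}x_0 + g = f$, so $x$ solves \eqref{eq.initial}.

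For uniqueness, if $x,x'$ are two solutions then $w := x - x' = (x-x_0) - (x'-x_0)\in\dom{L_{\theta,p}}$ and $\hat D^2 w + B\hat D w + Aw = f - f = 0$; by the consistency $\hat L_{\theta,p}\big|_{\dom{L_{\theta,p}}} = L_{\theta,p}$ this reads $L_{\theta,p}w = 0$, and since $L_{\theta,p}$ is boundedly invertible, $w = 0$. I do not anticipate any genuine obstacle: the only points requiring care are the consistency identity $\hat L_{\theta,p}\big|_{\dom{L_{\theta,p}}} = L_{\theta,p}$ and the embedding $\mathsf{D}_D(\theta,p)\hookrightarrow\mathsf{D}_{\hat D}(\theta,p)$, both immediate consequences of $\hat D$ extending $D$. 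The same proof applies verbatim with $\mathsf{D}_D(\theta)$ in place of $\mathsf{D}_D(\theta,p)$.
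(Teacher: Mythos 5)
Your proof is correct and follows essentially the same route as the paper: uniqueness via the injectivity of $L_{\theta,p}$ applied to the difference of two solutions (which lies in $\dom{L_{\theta ,p}}$, where $\hat{L}_{\theta ,p}$ agrees with $L_{\theta ,p}$), and existence by writing $x = x_0 + x_1$ with $L_{\theta ,p}x_1 = f - \hat{L}_{\theta ,p}x_0$ supplied by Theorem \ref{main}. The only difference is that you spell out the consistency $\hat{L}_{\theta ,p}\big|_{\dom{L_{\theta ,p}}} = L_{\theta ,p}$ and the embedding $\mathsf{D}_D (\theta ,p)\hookrightarrow \mathsf{D}_{\hat{D}} (\theta ,p)$, which the paper takes for granted.
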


\begin{proof}
{\it Uniqueness} follows from the injectivity of the operator $L_{\theta ,p}$ (Theorem \ref{main}): in fact, if $x_1$, $x_2\in\dom{\hat{L}_{\theta ,p}}$ are two solutions of \eqref{eq.initial}, then $\hat{L}_{\theta ,p} (x_1 - x_2)=0$. On the other hand, $x_1-x_2 = (x_1-x_0) - (x_2-x_0) \in \dom{L_{\theta ,p}}$, so that $\hat{L}_{\theta ,p} (x_1-x_2) = L_{\theta ,p} (x_1-x_2)$ since $\hat{L}_{\theta ,p}$ is an extension of $L_{\theta ,p}$. Now the injectivity of $L_{\theta ,p}$ yields $x_1=x_2$. \\
{\it Existence:} Let $g:= \hat{D}^2 x_0 + B\hat{D} x_0 + Ax_0 = \hat{L}_{\theta ,p} x_0 \in \mathsf{D}_{\hat{D}} (\theta ,p)$. By assumption, $f-g\in\mathsf{D}_D (\theta ,p)$. By Theorem \ref{main}, there exists a unique $x_1\in \dom{L_{\theta ,p}}$ such that 
\[
 L_{\theta ,p} x_1 = D^2 x_1 + BD x_1 + Ax_1 = f-g \in \mathsf{D}_{D} (\theta ,p).
\]
Then $x:= x_0+ x_1$ is a desired solution of the problem \eqref{eq.initial}.
\end{proof}

The assumptions of Theorem \ref{thm.initial} are quite general, and in view of an application to the second order Cauchy problem, we shall successively impose further assumptions.  Let $\hat{D}$ be a closed extension of $D$ such that the operator $\hat{D}^2$ with the natural domain is closed, too. Then the domain $\dom{\hat{L}_{\theta ,p}}$ equipped with the norm
\[
 \| x\|_{\dom{\hat{L}_{\theta ,p}}} := \| x\|_{\mathsf{D}_{\hat{D}} (\theta ,p)} + \| \hat{D}^2 x\|_{\mathsf{D}_{\hat{D}} (\theta ,p)} + \| B\hat{D} x\|_{\mathsf{D}_{\hat{D}} (\theta ,p)} + \| Ax\|_{\mathsf{D}_{\hat{D}} (\theta ,p)} ,
\]
becomes a Banach space. The domain $\dom{L_{\theta ,p}}$ is a closed subspace for the induced norm
\[
 \| x\|_{\dom{L_{\theta ,p}}} := \| x\|_{\mathsf{D}_D(\theta ,p)} + \| D^2 x\|_{\mathsf{D}_D(\theta ,p)} + \| BD x\|_{\mathsf{D}_D(\theta ,p)} + \| Ax\|_{\mathsf{D}_D(\theta ,p)}
\]
or for the equivalent graph norm
\[
\| x \|_{\dom{L_{\theta, p}}} := \| x\|_{\mathsf{D}_D(\theta ,p)} + \| L_{\theta ,p} x \|_{\mathsf{D}_D(\theta ,p)} .
\]
Consider now the bounded operators
\begin{align*}
S_{\theta ,p} : \dom{\hat{L}_{\theta ,p}} & \to \mathsf{D}_{\hat{D}} (\theta ,p) \times \dom{\hat{L}_{\theta ,p}} / \dom{L_{\theta ,p}} \\
 u & \mapsto  ( \hat{L}_{\theta ,p} u , [u]) 
\end{align*}
and
\begin{align*}
T_{\theta ,p} : \mathsf{D}_{\hat{D}} (\theta ,p) \times \dom{\hat{L}_{\theta ,p}} / \dom{L_{\theta ,p}} & \to \mathsf{D}_{\hat{D}} (\theta ,p) / \mathsf{D}_{D} (\theta ,p) , \\
 (f, [u_0] ) & \mapsto [ f - \hat{L}_{\theta ,p} u_0 ] \, , 
\end{align*}
where $u\mapsto [u]$ denotes various quotient maps. Note that $T_{\theta ,p}$ is well defined in the sense that the definition does not depend on the choice of the representative $u_0$. With these definitions one easily sees that the kernel ${\rm ker}\, T_{\theta ,p}$ is exactly the (closed) space of all pairs $(f, [u_0] )$ satisfying the compatibility condition from Theorem \ref{thm.initial} (which does not depend on the representative $u_0$), and that the compatibility condition is necessary for the existence of a solution of \eqref{eq.initial} since $S_{\theta ,p}$ maps into ${\rm ker}\, T_{\theta ,p}$. Theorem \ref{thm.initial} implies that $S_{\theta ,p}$ is an isomorphism onto ${\rm ker}\, T_{\theta ,p}$. 

The drawback of this abstract situation is, however, that in general we have no general description of either the kernel of $T_{\theta ,p}$, or the quotient space $\dom{\hat{L}_{\theta ,p}} / \dom{L_{\theta ,p}}$. 

\begin{example}[The second order Cauchy problem]
Let $A$ and $B$ be two closed, linear operators on a Banach space $X$, and let $1\leq p< \infty$, $1\leq q\leq \infty$ and $\theta\in (0,\frac{1}{p})$. On the space $L^p (0,1;X)$, let the differentiation operators $D$ and $\hat{D}:= D_{max}$ be given as in Example \ref{d.1}. Then $\hat{D}^2$ is closed as one easily verifies.  Unlike in the proof of Theorem \ref{main2}, we denote the multiplication operators on $L^p (0,T;X)$ again by $A$ and $B$, respectively. 

Recall from Example \ref{d.1} that  
\[
 \mathsf{D}_{\hat{D}} (\theta ,p) = \mathsf{D}_{D} (\theta ,p) = B^{\theta}_{pq} (0,T;X) .
\]
Hence,
\[
 \dom{\hat{L}_{\theta ,q}} = \{ u\in W^{2,p} (0,T;X) : \ddot u , \, B\dot u , \, Au \in B^{\theta}_{pq} (0,T;X) \}
\]
and
\[
 \dom{L_{\theta ,q}} = \{ u\in \dom{\hat{L}_{\theta ,q}} : u(0) = \dot u(0) = 0\} .
\]
Note that in this situation, the quotient $\dom{\hat{L}_{\theta ,q}} / \dom{L_{\theta ,q}}$ can be naturally identified with the {\em trace space}
\[
 (X,\dom{B} , \dom{A})_{B^{\theta}_{pq}} := \{ (u_0 ,u_1) \in X\times X : \exists u\in \dom{\hat{L}_{\theta ,q}} \text{ s.t. } u(0) = u_0 , \, \dot u(0) = u_1 \} ,
\]
and the quotient map is then the natural {\em trace operator} $u \mapsto (u(0),\dot u(0))$. For the trace space we use a notation which is similar to the notation of classical real interpolation spaces between a {\em pair} of Banach spaces.  This is appropriate because the classical real interpolation spaces can be identified with trace spaces involving weighted $L^p$ spaces. However, we point out that here we ``interpolate'' between three Banach spaces and that the trace space is a subspace of the product space $X\times X$. 

\begin{corollary}
Let $A$ and $B$ be two closed, linear operators on a Banach space $X$ satisfying the hypotheses (b), (c) and (d) of Theorem \ref{main2}. Let $1\leq p< \infty$, $1\leq q\leq \infty$ and $\theta\in (0,\frac{1}{p})$. Then for every $f\in B^{\theta}_{pq} (0,T;X)$ and every $(u_0,u_1)\in (X,\dom{B},\dom{A})_{B^{\theta}_{pq}}$ the second order Cauchy problem
\begin{equation} \label{second.initial}
\begin{split}
& \ddot u + B \dot u + A u = f  \quad \text{in } [0,T] , \\
& u(0) = u_0 , \quad \dot u (0) = u_1 ,
\end{split} 
\end{equation} 
admits a unique solution $u\in B^{\theta}_{pq} (0,T ;X)$ satisfying
\[
 \dot u, \, \ddot u , \, B\dot u , \, Au \in B^{\theta}_{pq} (0,T;X) .
\]
\end{corollary}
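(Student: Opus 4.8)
The plan is to obtain the corollary as a direct specialisation of Theorem~\ref{thm.initial}, the point being that for $\theta<\frac{1}{p}$ the compatibility condition appearing there becomes automatic.

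First I would fix the concrete operators as in the Example above: on $L^p(0,T;X)$ let $D$ be the differentiation operator with domain $\mathring{W}^{1,p}(0,T;X)$, let $\hat{D}:=D_{max}$ be the differentiation operator with domain $W^{1,p}(0,T;X)$, and let $A$, $B$ denote the multiplication operators with their natural domains. Exactly as in the proof of Theorem~\ref{main2}, the hypotheses (a)--(d) of Theorem~\ref{main} are met: $D$ is invertible and sectorial of angle $\frac{\pi}{2}$ (hence of some angle $<\varphi$), both multiplication operators commute with $D$, and (b), (c), (d) pass over pointwise from the assumptions on $A$ and $B$. Thus $L_{\theta,q}$ is boundedly invertible on $\mathsf{D}_D(\theta,q)$ --- in particular injective --- the operator $\hat{D}^2=D_{max}^2$ is closed, and the whole machinery of Theorem~\ref{thm.initial} is available. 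The decisive remark is that, since $\theta<\frac{1}{p}$, Example~\ref{d.1} gives $\mathsf{D}_{\hat{D}}(\theta,q)=\mathsf{D}_D(\theta,q)=B^{\theta}_{pq}(0,T;X)$. Consequently, for any $f\in B^{\theta}_{pq}(0,T;X)$ and any $x_0\in\dom{\hat{L}_{\theta,q}}$ one has $f-\hat{L}_{\theta,q}x_0\in\mathsf{D}_{\hat{D}}(\theta,q)=\mathsf{D}_D(\theta,q)$, so the compatibility condition of Theorem~\ref{thm.initial} imposes nothing.

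Next, for existence I would use the definition of the trace space: given $(u_0,u_1)\in(X,\dom{B},\dom{A})_{B^{\theta}_{pq}}$ there is, by definition, some $x_0\in\dom{\hat{L}_{\theta,q}}$ with $x_0(0)=u_0$ and $\dot{x}_0(0)=u_1$. Theorem~\ref{thm.initial} then produces a unique $x\in\dom{\hat{L}_{\theta,q}}$ solving $\hat{D}^2x+B\hat{D}x+Ax=f$ and $x-x_0\in\dom{L_{\theta,q}}$. Unwinding this in the concrete setting, $\hat{D}^2x=\ddot{x}$, $\hat{D}x=\dot{x}$, and $x-x_0\in\dom{L_{\theta,q}}$ says precisely $(x-x_0)(0)=0$ and $(\dot{x}-\dot{x}_0)(0)=0$, i.e. $x(0)=u_0$, $\dot{x}(0)=u_1$; hence $u:=x$ solves \eqref{second.initial}. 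The regularity is then read off: $u\in\dom{\hat{L}_{\theta,q}}\subseteq W^{2,p}(0,T;X)$ gives $u,\dot{u}\in W^{1,p}(0,T;X)\hookrightarrow B^{\theta}_{pq}(0,T;X)$, while $\ddot{u},B\dot{u},Au\in B^{\theta}_{pq}(0,T;X)$ by membership in $\dom{\hat{L}_{\theta,q}}$.

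For uniqueness I would argue that any solution $u$ in the stated class automatically lives in the right space: from $u,\dot{u},\ddot{u}\in L^p(0,T;X)$ we get $u\in W^{2,p}(0,T;X)\hookrightarrow C^1([0,T];X)$, so the trace $(u(0),\dot{u}(0))=(u_0,u_1)$ makes sense, and together with $\ddot{u},B\dot{u},Au\in B^{\theta}_{pq}(0,T;X)$ this puts $u$ in $\dom{\hat{L}_{\theta,q}}$ with $u-x_0\in\dom{L_{\theta,q}}$; hence $u$ solves problem \eqref{eq.initial}, and the uniqueness clause of Theorem~\ref{thm.initial} --- which rests on the injectivity of $L_{\theta,q}$ from Theorem~\ref{main} --- forces $u=x$. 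I do not expect a genuine obstacle here: the substantive content is already packed into Theorems~\ref{main} and~\ref{thm.initial}, and the only mildly delicate point is the identification, carried out in the Example, of $\dom{\hat{L}_{\theta,q}}/\dom{L_{\theta,q}}$ with the trace space $(X,\dom{B},\dom{A})_{B^{\theta}_{pq}}$ via $u\mapsto(u(0),\dot{u}(0))$ --- which is exactly what guarantees the existence of the lift $x_0$ --- together with the elementary Sobolev embeddings used above to return an arbitrary solution into $\dom{\hat{L}_{\theta,q}}$.
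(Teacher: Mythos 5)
Your proposal is correct and follows essentially the same route as the paper: both rest on the observation that for $\theta<\frac{1}{p}$ one has $\mathsf{D}_{\hat{D}}(\theta,q)=\mathsf{D}_D(\theta,q)=B^{\theta}_{pq}(0,T;X)$, so the compatibility condition of Theorem~\ref{thm.initial} is vacuous (the paper phrases this as $T_{\theta,q}=0$, hence the invertibility of $S_{\theta,q}$), after which existence and uniqueness follow from Theorem~\ref{thm.initial} together with the identification of $\dom{\hat{L}_{\theta,q}}/\dom{L_{\theta,q}}$ with the trace space. Your version merely unpacks the isomorphism $S_{\theta,q}$ into explicit existence and uniqueness arguments, which is a harmless elaboration of the same proof.
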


\begin{proof}
Note that for the particular choice of $p$, $q$ and $\theta$ we have $T_{\theta ,q} = 0$, and hence the compatibility condition from Theorem \ref{thm.initial} is empty. In other words, by Theorem \ref{thm.initial}, the operator
\begin{align*}
 S_{\theta ,q} : \dom{\hat{L}_{\theta ,q}} & \to B^{\theta}_{pq} (0,T;X) \times (X,\dom{B} ,\dom{A})_{\theta ,q}  \\
 u & \mapsto  ( \hat{L}_{\theta ,q} u , u(0) ,\dot u(0) )
\end{align*}
is invertible, and this implies the claim.
\end{proof}

We point out that in the particular case $p=1$ there is no restriction on the value of $\theta\in (0,1)$. 

The identification of the trace space $(X,\dom{B},\dom{A})_{B^{\theta}_{pq}}$, even for particular choices of $X$, $B$ and $A$, is left as an open problem.
\end{example}

\section{Examples} \label{sec.4}

\begin{example}[Strong damping] \label{ex.1}
Let $\Omega\subseteq\R^n$ be an open set, and let $\alpha >0$. We consider the following initial-boundary value problem:
\begin{equation} \label{eq.ex.1}
\begin{array}{ll}
u_{tt} - \alpha \Delta u_t -\Delta u = f & \text{in } (0,T)\times\Omega , \\[2mm]
u = 0 & \text{in } (0,T)\times\partial\Omega , \\[2mm]
u(0,x)= 0 & \text{in } \Omega , \\[2mm]
u_t (0,x) = 0 & \text{in } \Omega . 
\end{array}
\end{equation}
For $1\leq r\leq \infty$, we consider the space
\[
 X_r := \begin{cases}
         L^r (\Omega ) & \text{if } 1\leq r <\infty , \\[2mm]
         C_0 (\Omega ) & \text{if } r=\infty .
        \end{cases}
\]
On $X_2 = L^2 (\Omega )$ we consider the negative Dirichlet-Laplace operator $B_2$ given by
\begin{align*}
 \dom{B_2} & := \{ u\in H^1_0 (\Omega ) : \exists f\in L^2 (\Omega ) \, \forall v\in H^1_0 (\Omega ) : \int_\Omega \nabla u \overline{\nabla v} = \int_\Omega f\bar{v} \} , \\
B_2 u & := f .
\end{align*}
It is known that $B_2$ is selfadjoint, nonnegative, and thus sectorial of angle $\varphi = 0$. Hence, the operator $-B_2$ generates an analytic $C_0$-semigroup which is known to have Gaussian upper bounds \cite{ArEl97}, \cite{Da00}, \cite{Ou04}. Thus, if $1\leq r<\infty$, the operator $B_2$, restricted to $X_r \cap L^2 (\Omega )$, extends consistently to a sectorial operator $B_r$ on $X_r$ of angle $\varphi = 0$ \cite[Theorem 2.3]{Hi96}. For domains with uniform $C^2$-boundary, and if $1\leq r < \infty$, one may also refer to \cite[Theorem 3.1.3]{Lu95}, where one finds also the characterization of the domain 
\[
\dom{B_r} = W^{2,r} (\Omega ) \cap W^{1,r}_0 (\Omega ) \text{ if } 1<r<\infty . 
\]
However, we are particularly interested in the end points $r=1$ and $r=\infty$. 

If $1\leq r<\infty$, and if we put $A := B:= B_r$ and $\varepsilon = 1$, then we see that this example is a special case of Corollary \ref{epsilon.1}. We thus obtain the following result.

\begin{corollary} \label{ex.1.cor.1}
Fix $\theta\in (0,1)$ $1 \leq p$, $q\leq \infty$, and $1\leq r<\infty$. Then for every $f\in B^{\theta}_{pq} (0,T; L^r (\Omega ))$ the problem \eqref{eq.ex.1} admits a unique strong solution
\[
 u\in B^{\theta +1}_{pq} (0,T ; \dom{B_r} ) \cap B^{2+\theta}_{pq} (0,T; L^r (\Omega )) .
\]
\end{corollary}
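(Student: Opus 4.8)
The plan is to read off Corollary~\ref{ex.1.cor.1} from Corollary~\ref{epsilon.1} via the reformulation already indicated in Example~\ref{ex.1}, and then to translate the abstract conclusion into the language of vector-valued Besov spaces. First I would set $X=L^r(\Omega)$ and take $A:=B:=B_r$, so that \eqref{eq.ex.1} becomes the abstract problem $\ddot u+\alpha B_r\dot u+B_r u=f$, $u(0)=\dot u(0)=0$, that is, \eqref{eq.1} with $\varepsilon=1$. For $1\le r<\infty$ the operator $B_r$ is sectorial of angle $0$, hence sectorial of some angle $\varphi<\tfrac\pi2=\tfrac{\pi}{2\varepsilon}$, so hypothesis~(c) of Corollary~\ref{epsilon.1} holds for every $\alpha>0$. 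Corollary~\ref{epsilon.1} then yields Besov maximal regularity of \eqref{eq.ex.1} in the sense of Theorem~\ref{main2}: for $f\in B^\theta_{pq}(0,T;L^r(\Omega))$ with $\theta<\tfrac1p$ (and, with the modification discussed below, for $\theta\ge\tfrac1p$) there is a unique $u$ with
\[
u,\ \dot u,\ \ddot u,\ B_r\dot u,\ B_r u\ \in\ B^\theta_{pq}(0,T;L^r(\Omega)) ,
\]
and the homogeneous initial conditions $u(0)=\dot u(0)=0$ are built into the construction, since $Sx\in\dom{D^2}$ and $\dom{D}=\mathring{W}^{1,p}(0,T;X)$.

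Next I would identify this regularity class with $B^{\theta+1}_{pq}(0,T;\dom{B_r})\cap B^{2+\theta}_{pq}(0,T;L^r(\Omega))$, using two routine facts about vector-valued Besov spaces: (i) for a Banach space $Y$ and an integer $k\ge0$, $v\in B^{\theta+k}_{pq}(0,T;Y)$ if and only if $v,\dot v,\dots,v^{(k)}\in B^\theta_{pq}(0,T;Y)$; and (ii) since $B_r$ is closed and $\dom{B_r}$ carries the graph norm, $v\in B^\theta_{pq}(0,T;\dom{B_r})$ if and only if $v(t)\in\dom{B_r}$ for a.e.\ $t$ and $v,B_r v\in B^\theta_{pq}(0,T;L^r(\Omega))$. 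Applying (i) with $k=2$ and $Y=L^r(\Omega)$ turns $u,\dot u,\ddot u\in B^\theta_{pq}$ into $u\in B^{2+\theta}_{pq}(0,T;L^r(\Omega))$, while (i) with $k=1$ and $Y=\dom{B_r}$, combined with (ii), turns $u,\dot u,B_ru,B_r\dot u\in B^\theta_{pq}$ into $u\in B^{\theta+1}_{pq}(0,T;\dom{B_r})$. Such a $u$ is a strong solution of \eqref{eq.ex.1}: since $u(t)\in\dom{B_r}$ for a.e.\ $t$ (which for $1<r<\infty$ is $W^{2,r}(\Omega)\cap W^{1,r}_0(\Omega)$) the differential equation holds in $L^r(\Omega)$ for a.e.\ $t$ and the Dirichlet boundary condition is encoded in $\dom{B_r}$; uniqueness within the stated class is inherited from Theorem~\ref{main2}, equivalently from the injectivity of $L_{\theta,p}$ in Theorem~\ref{main}.

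The argument has no serious obstacle; the only point needing care is the range of $\theta$. Theorem~\ref{main2} provides a solution for every $f\in B^\theta_{pq}(0,T;L^r(\Omega))$ only when $\theta<\tfrac1p$, because for $\theta\ge\tfrac1p$ the relevant interpolation space $\mathsf{D}_D(\theta,p)$ is $\mathring{B}^\theta_{pq}$ rather than $B^\theta_{pq}$ (Example~\ref{d.1}); in that range one must therefore require $f\in\mathring{B}^\theta_{pq}(0,T;L^r(\Omega))$, i.e.\ impose the compatibility condition $f(0)=0$ — which is in any case necessary, being the value of $\ddot u(0)+\alpha B_r\dot u(0)+B_r u(0)$ for a solution of that regularity with homogeneous initial data. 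Hence Corollary~\ref{ex.1.cor.1} should be read with $\mathring{B}^\theta_{pq}$ in place of $B^\theta_{pq}$ when $\theta\ge\tfrac1p$; in particular, when $p=1$ no restriction on $\theta\in(0,1)$ is needed, in accordance with the remark following the second order Cauchy problem example. Beyond this bookkeeping, nothing is used except Corollary~\ref{epsilon.1} and the identification of the interpolation spaces from Examples~\ref{d.1} and~\ref{d.2}.
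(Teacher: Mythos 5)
Your proof follows the paper's own route exactly: the paper obtains this corollary in one line by setting $A=B=B_r$ and $\varepsilon=1$ in Corollary~\ref{epsilon.1} (case (c), since $B_r$ is sectorial of angle $0<\tfrac{\pi}{2}$), leaving the translation of the conclusion of Theorem~\ref{main2} into membership of $B^{\theta+1}_{pq}(0,T;\dom{B_r})\cap B^{2+\theta}_{pq}(0,T;L^r(\Omega))$ implicit, which you carry out correctly. Your caveat that for $\theta\ge\tfrac1p$ the hypothesis must be $f\in\mathring{B}^{\theta}_{pq}(0,T;L^r(\Omega))$ is a genuine point that the statement of the corollary glosses over, and your reading is the right one.
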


On the space $X_\infty = C_0 (\Omega )$ we take the following realization of the negative Dirichlet-Laplace operator:
\begin{align*}
\dom{B_\infty} & := \{ u\in C_0 (\Omega ) : \Delta u \in C_0 (\Omega ) \} ,\\
 B_\infty u & := - \Delta u .
\end{align*}
It has been shown in \cite[Theorem 1.1]{ArBe99} that $-B_\infty$ is the generator of an analytic semigroup if and only if $\Omega$ is Wiener regular, that is, if and only at each point $x\in\partial\Omega$ there exists a barrier \cite[Definition 3.1]{ArBe99}. A bounded open set $\Omega$ is Wiener regular if and only if the Dirichlet problem
\begin{equation*} \label{eq.wiener}
\begin{array}{ll}
- \Delta u = 0 & \text{in } \Omega , \\[2mm]
u = g & \text{in } \partial\Omega ,
\end{array}
\end{equation*}
admits for each $g\in C (\partial\Omega )$ a unique solution $u\in C (\bar{\Omega})$. Note that in $\R^2$ every bounded, simply connected domain is Wiener regular \cite[Corollary 4.18, p.276]{Co78}. 

If $\Omega$ is Wiener regular, then the operator $B_\infty$ is sectorial of angle $<\frac{\pi}{2}$. Again, if we put $A:= B := B_\infty$ and $\varepsilon = 1$, then we see that Corollary \ref{epsilon.1} applies to problem \eqref{eq.ex.1} and we obtain the following corollary. 

\begin{corollary} \label{ex.1.cor.2}
Assume that $\Omega$ is open and Wiener regular, and fix $\theta\in (0,1)$. Then for every $f\in C^\theta ([0,T] ; C_0 (\Omega ))$ the problem \eqref{eq.ex.1} admits a unique strong solution
\[
 u\in C^{1,\theta} ([0,T];\dom{B_\infty} ) \cap C^{2,\theta} ([0,T] ; C_0 (\Omega )) .
\]
\end{corollary}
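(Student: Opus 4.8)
The plan is to recognise \eqref{eq.ex.1} as the instance $A=B:=B_\infty$, $\varepsilon=1$, $\alpha>0$ of the abstract strongly damped equation \eqref{eq.1} and to invoke Corollary \ref{epsilon.1}. First I would verify its hypothesis: as recalled above, Wiener regularity of $\Omega$ makes $B_\infty$ sectorial of some angle $\varphi\in(0,\frac{\pi}{2})$, and since $\frac{\pi}{2}=\frac{\pi}{2\varepsilon}$ when $\varepsilon=1$, alternative (c) of Corollary \ref{epsilon.1} is met for every $\alpha>0$. Hence \eqref{eq.ex.1} has H\"older maximal regularity in the sense of Theorem \ref{main2}.

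The second step is to read this off concretely. Running Theorem \ref{main2} on $X=C([0,T];C_0(\Omega))$, the relevant operator $D$ is the derivative subject to $u(0)=0$, for which $\mathsf{D}_D(\theta,\infty)=\mathring{C}^\theta([0,T];C_0(\Omega))$ by Example \ref{d.2}. Thus for $f$ in that space there is a unique $u$ with $u,\dot u,\ddot u,B_\infty\dot u,B_\infty u\in\mathring{C}^\theta([0,T];C_0(\Omega))$; since $B_\infty u=-\Delta u$, this is precisely $u\in C^{1,\theta}([0,T];\dom{B_\infty})\cap C^{2,\theta}([0,T];C_0(\Omega))$, and all three terms of the equation then lying in $C^\theta$ makes $u$ a strong solution. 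This already proves the corollary for data $f$ vanishing at $t=0$.

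For a general $f\in C^\theta([0,T];C_0(\Omega))$ I would split $f=g+h$ with $h(t):=e^{t}f(0)$, so that $g:=f-h\in\mathring{C}^\theta$, solve the $g$-problem by the step above, and solve the auxiliary problem $\ddot v+\alpha B_\infty\dot v+B_\infty v=h$, $v(0)=\dot v(0)=0$, directly from the representation \eqref{S}: here $v(t)=e^{t}H(1)f(0)+\frac{1}{2\pi i}\int_\Gamma e^{\lambda t}H(\lambda)f(0)\,\frac{\ud\lambda}{\lambda-1}$, with $\Gamma$ a path in $\Sigma_{\varphi}$ surrounding the pole $\lambda=1$ of the Laplace transform of $h$ and kept away from $0$ (so that the possible non-invertibility of $B_\infty$ when $\Omega$ is unbounded plays no role, $H(1)$ existing by hypothesis (b)), and then $u:=u_g+v$. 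The main obstacle is precisely this last estimate: one must bound $v,\dot v,\ddot v,B_\infty\dot v,B_\infty v$ in $C^\theta$ up to the endpoint $t=0$ using only (c) and (d). Since $A=B$, (d) forces both $\lambda\mapsto B_\infty H(\lambda)$ and $\lambda\mapsto\lambda B_\infty H(\lambda)$ to be bounded, so $\|B_\infty H(\lambda)\|\lesssim|\lambda|^{-1}$; together with $\|\lambda^2H(\lambda)\|\lesssim1$ this governs the interior H\"older norms and gives $\ddot v(0)=f(0)$, $B_\infty v(0)=B_\infty\dot v(0)=0$, the datum $f(0)$ being routed entirely into $\ddot u$, where only a value — not a modulus of continuity — is prescribed. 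I expect the genuine difficulty to be the behaviour of $B_\infty\dot v$ (hence of $\ddot v=h-\alpha B_\infty\dot v-B_\infty v$) as $t\downarrow0$: that estimate rests on extra decay of $\lambda\mapsto B_\infty H(\lambda)f(0)$ beyond $|\lambda|^{-1}$, reflecting the position of $f(0)$ relative to the trace space of $B_\infty$, and it is here — rather than in the application of Corollary \ref{epsilon.1} — that the substantive work of the corollary lies.
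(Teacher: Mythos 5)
Your first two paragraphs are exactly the paper's proof: the paper disposes of this corollary in a single sentence by setting $A:=B:=B_\infty$, $\varepsilon=1$, noting that Wiener regularity makes $B_\infty$ sectorial of angle $<\frac{\pi}{2}=\frac{\pi}{2\varepsilon}$ so that case (c) of Corollary \ref{epsilon.1} applies, and reading off the H\"older conclusion through Theorem \ref{main2} and Example \ref{d.2}. Up to that point your argument is correct and identical in substance to the paper's.

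Your third paragraph addresses a point the paper passes over in silence: Theorem \ref{main2} (hence Corollary \ref{epsilon.1}) gives H\"older maximal regularity only for $f\in\mathring{C}^\theta([0,T];C_0(\Omega))$, i.e.\ $f(0)=0$, whereas the corollary is stated for all $f\in C^\theta([0,T];C_0(\Omega))$. Your proposed reduction via $f=g+h$ with $h(t)=e^{t}f(0)$ cannot be completed for arbitrary $f(0)\in C_0(\Omega)$, and your own closing diagnosis explains why: up to smooth terms the leading behaviour of $B_\infty\dot v$ near $t=0$ is that of $t\mapsto e^{-\alpha tB_\infty}f(0)$, which is $\theta$-H\"older on $[0,T]$ if and only if $f(0)\in\mathsf{D}_{B_\infty}(\theta,\infty)$; this is an extra hypothesis on $f(0)$, not something extractable from conditions (b)--(d). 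So the statement is only established (by you and by the paper alike) for $f\in\mathring{C}^\theta([0,T];C_0(\Omega))$, and for $f(0)\neq 0$ the full conclusion $B_\infty\dot u\in C^\theta$ up to $t=0$ genuinely requires $f(0)$ to lie in a trace space. In short: your paragraphs 1--2 reproduce the paper's proof; paragraph 3 correctly identifies, but does not (and without an additional hypothesis cannot) close, a gap that sits in the paper's formulation of the corollary rather than in your argument.
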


\begin{remark}
Note again that the above maximal regularity results apply in particular in the spaces $L^1 (\Omega )$ (Corollary \ref{ex.1.cor.1}) and $C_0 (\Omega )$ (Corollary \ref{ex.1.cor.2}) which are not UMD spaces. Moreover, in Corollary \ref{ex.1.cor.1}, the time regularity allows us to consider also the space $B^\theta_{1,q}$ and in particular $B^\theta_{1,1}$. 
\end{remark}

\end{example}

\begin{example}[Strong damping] \label{ex.1a}
Let $\Omega\subseteq\R^n$ be an open set. We consider now the following initial-boundary value problem:
\begin{equation} \label{eq.ex.2}
\begin{array}{ll}
u_{tt} - {\mathcal A} (x,D) u_t - {\mathcal A} (x,D) u = f & \text{in } (0,T)\times\Omega , \\[2mm]
u = 0 & \text{in } (0,T)\times\partial\Omega , \\[2mm]
u(0,x)= 0 & \text{in } \Omega , \\[2mm]
u_t (0,x) = 0 (x) & \text{in } \Omega . 
\end{array}
\end{equation}
Here ${\mathcal A} (x,D)$ is formally given by
\[
 {\mathcal A} (x,D) u = \sum_{i,j=1}^n D_i (a_{ij} D_j u) + \sum_{i=1}^n (D_i (b_iu) + c_i D_i u) + du 
\]
with real coefficients $a_{ij}$, $b_i$, $c_i$, $d\in L^\infty (\Omega )$ satisfying the ellipticity condition
\[
 \sum_{i,j=1}^n a_{ij} (x) \xi_i \xi_j \geq \eta \, |\xi|^2 
\]
for some $\eta >0$ and all $x\in\Omega$, $\xi\in\R^n$, and the dissipativity condition
\[
 \sum_{i=1}^n D_i b_i + d \leq 0 \text{ in } {\mathcal D} (\Omega )' .
\]
Under these assumptions, we have an operator ${\mathcal A} : H^1 (\Omega ) \to H^{-1} (\Omega )$ given by
\[
 \langle {\mathcal A} u , v \rangle_{H^{-1},H^1_0} := \sum_{i,j=1}^n \int_\Omega a_{ij} D_j u \overline{D_i v} + \sum_{i=1}^n \int_\Omega (b_i u \overline{D_i v} - c_i D_i u \bar{v}) - \int_\Omega du\bar{v} .
\]

We consider the same scale of spaces as in Example \ref{ex.1}. We now define an operator $B_2$ on $X_2 = L^2 (\Omega )$ by
\begin{align*}
\dom{B_2} & := \{ u\in H^1_0 (\Omega ) : \exists f\in L^2 (\Omega ) \, \forall v\in H^1_0 (\Omega ) : \langle {\mathcal A} u , v\rangle_{H^{-1},H^1_0} = \int_\Omega f\bar{v} \} ,  \\  
B_2 u & := f .
\end{align*}
The operator $B_2$ is associated with an elliptic form, it is sectorial of angle $\varphi <\frac{\pi}{2}$, and hence $-B_2$ generates an analytic $C_0$-semigroup. Again, this semigroup has Gaussian upper bounds \cite{ArEl97}, \cite{Da00}, \cite{Ou04}, and if $1\leq r<\infty$, then the operator $B_2$, restricted to $X_r \cap L^2 (\Omega )$, extends consistently to a sectorial operator $B_r$ on $X_r$ of the same angle $\varphi$ \cite[Theorem 2.3]{Hi96}. 
In particular, if $1\leq r<\infty$, and if we put $A := B := B_r$ and $\varepsilon = 1$, then we see that this example is also a special case of Corollary \ref{epsilon.1}. We thus obtain the following result.

\begin{corollary} \label{ex.2.cor.1}
Fix $\theta\in (0,1)$ $1 \leq p$, $q\leq \infty$, and $1\leq r<\infty$. Then for every $f\in B^{\theta}_{pq} (0,T; L^r (\Omega ))$ the problem \eqref{eq.ex.2} admits a unique strong solution
\[
 u\in B^{\theta +1}_{pq} (0,T ; \dom{B_r} ) \cap B^{2+\theta}_{pq} (0,T; L^r (\Omega )) .
\]
\end{corollary}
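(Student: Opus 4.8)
The plan is to recognise \eqref{eq.ex.2} as a special instance of the abstract problem \eqref{eq.1} and to invoke Corollary \ref{epsilon.1}; the only remaining work is to translate the abstract maximal regularity statement into the concrete vector-valued Besov spaces over $\Omega$, exactly as was done for Corollary \ref{ex.1.cor.1} in Example \ref{ex.1}.

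First I would use the facts assembled just before the statement: for $1\le r<\infty$ the operator $B_r$ on $X_r=L^r(\Omega)$ is the consistent $L^r$-extension of the form operator $B_2$, which is legitimate because $-B_2$ generates an analytic $C_0$-semigroup with Gaussian upper bounds (so that \cite[Theorem 2.3]{Hi96} applies), and $B_r$ is sectorial of some angle $\varphi<\frac{\pi}{2}$. Putting $A:=B:=B_r$, $\varepsilon:=1$ and $\alpha:=1$, the equation in \eqref{eq.ex.2} becomes exactly \eqref{eq.1}. Since $\varepsilon=1$ we have $\frac{\pi}{2\varepsilon}=\frac{\pi}{2}>\varphi$, and $\alpha=1>0$, so hypothesis (c) of Corollary \ref{epsilon.1} holds. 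Consequently \eqref{eq.ex.2} has Besov maximal regularity in the sense of Theorem \ref{main2}: applying that theorem in its $L^p(0,T;L^r(\Omega))$ realisation, for every $f\in B^\theta_{pq}(0,T;L^r(\Omega))$ (with $\mathring B^\theta_{pq}$ in place of $B^\theta_{pq}$ when $\theta\ge\frac1p$) there is a unique $u$ with $u(0)=\dot u(0)=0$ solving $\ddot u+B_r\dot u+B_r u=f$ and such that $u$, $\dot u$, $\ddot u$, $B_r\dot u$, $B_r u$ all lie in $B^\theta_{pq}(0,T;L^r(\Omega))$.

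It then remains to repackage this regularity. Using the graph norm on $\dom{B_r}$: from $u,B_ru\in B^\theta_{pq}(0,T;L^r(\Omega))$ one gets $u\in B^\theta_{pq}(0,T;\dom{B_r})$, and from $\dot u,B_r\dot u\in B^\theta_{pq}(0,T;L^r(\Omega))$ one gets $\dot u\in B^\theta_{pq}(0,T;\dom{B_r})$; combining these via $B^{s+1}_{pq}(0,T;Y)=\{\, v\in B^s_{pq}(0,T;Y):\dot v\in B^s_{pq}(0,T;Y)\,\}$ yields $u\in B^{\theta+1}_{pq}(0,T;\dom{B_r})$. Similarly $u,\dot u,\ddot u\in B^\theta_{pq}(0,T;L^r(\Omega))$ yields $u\in B^{2+\theta}_{pq}(0,T;L^r(\Omega))$, and uniqueness is inherited from Theorem \ref{main2}. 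Since $B_r$ is the $L^r$-realisation of the Dirichlet problem for ${\mathcal A}(x,D)$, this $u$ is the asserted strong solution of \eqref{eq.ex.2}.

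I do not anticipate a genuine obstacle. The one substantive ingredient, as opposed to pure bookkeeping, is the sectoriality of $B_r$ of angle strictly below $\frac{\pi}{2}$ in the endpoint and non-smooth cases (notably $r=1$ with $\Omega$ merely open); but this is exactly what the elliptic form construction, the Gaussian bounds, and \cite[Theorem 2.3]{Hi96} deliver in the discussion preceding the statement. Everything else is the same translation between Theorem \ref{main2} and vector-valued Besov spaces already carried out in Example \ref{ex.1}.
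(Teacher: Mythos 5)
Your proposal matches the paper's own (very brief) argument exactly: the paper likewise sets $A:=B:=B_r$, $\varepsilon=1$ and invokes Corollary \ref{epsilon.1} via hypothesis (c), using the sectoriality of angle $\varphi<\frac{\pi}{2}$ supplied by the elliptic form construction, the Gaussian bounds and \cite[Theorem 2.3]{Hi96}. Your additional bookkeeping translating $u,\dot u,\ddot u,B_r\dot u,B_ru\in B^\theta_{pq}$ into the stated membership in $B^{\theta+1}_{pq}(0,T;\dom{B_r})\cap B^{2+\theta}_{pq}(0,T;L^r(\Omega))$, and your parenthetical caveat about $\mathring B^\theta_{pq}$ when $\theta\geq\frac1p$, are correct and only make explicit what the paper leaves implicit.
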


On the space $X_\infty = C_0 (\Omega )$ we consider the following operator:
\begin{align*}
\dom{B_\infty} & := \{ u\in C_0 (\Omega ) \in H^1_{loc} (\Omega ): {\mathcal A} (x,D) u \in C_0 (\Omega ) \} ,\\
 B_\infty u & := - {\mathcal A} (x,D) u .
\end{align*}
It has been shown in \cite[Corollary 4.7]{ArBe99} that if $\Omega$ is bounded and Wiener regular, then $-B_\infty$ is the generator of an analytic semigroup. Hence, if $\Omega$ is bounded and Wiener regular, then the operator $B_\infty$ is sectorial of angle $<\frac{\pi}{2}$. Again, if we put $A:= B:= B_\infty$ and $\varepsilon = 1$, then we see that Corollary \ref{epsilon.1} applies to problem \eqref{eq.ex.2} and we obtain the following corollary. 

\begin{corollary} \label{ex.2.cor.2}
Assume that $\Omega$ is open, bounded and Wiener regular, and fix $\theta\in (0,1)$. Then for every $f\in C^\theta ([0,T] ; C_0 (\Omega ))$ the problem \eqref{eq.ex.2} admits a unique strong solution
\[
 u\in C^{1,\theta} ([0,T];\dom{B_\infty} ) \cap C^{2,\theta} ([0,T] ; C_0 (\Omega )) .
\]
\end{corollary}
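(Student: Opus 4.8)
The plan is to reduce Corollary~\ref{ex.2.cor.2} to Corollary~\ref{epsilon.1}(c) with $\varepsilon=1$, exactly as indicated in the text, by verifying that the operator $B_\infty$ on $X_\infty=C_0(\Omega)$ is sectorial of some angle $\varphi<\frac{\pi}{2}$ whenever $\Omega$ is open, bounded and Wiener regular, and then feeding this into the abstract machinery. Concretely, I would proceed as follows.

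\emph{Step 1 (Sectoriality of $B_\infty$).} First I would invoke \cite[Corollary 4.7]{ArBe99}, which asserts that for $\Omega$ bounded and Wiener regular the operator $-B_\infty=\mathcal{A}(x,D)$, with the domain $\dom{B_\infty}=\{u\in C_0(\Omega)\cap H^1_{loc}(\Omega):\mathcal{A}(x,D)u\in C_0(\Omega)\}$, generates an analytic $C_0$-semigroup on $C_0(\Omega)$. Analyticity of the semigroup generated by $-B_\infty$ together with the dissipativity condition $\sum_i D_ib_i+d\le 0$ (which places the spectrum of $B_\infty$ in a right half-plane, in fact in a sector) gives that $B_\infty$ is sectorial of some angle $\varphi\in(0,\frac{\pi}{2})$; this is the standard equivalence between generation of a bounded analytic semigroup and sectoriality of angle $<\frac{\pi}{2}$, cf.\ the remarks preceding Example~\ref{d.1} and the resolvent estimates in \cite{Lu95}.

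\emph{Step 2 (Application of Corollary~\ref{epsilon.1}).} With $A:=B:=B_\infty$, $\alpha=1$ and $\varepsilon=1$, condition (c) of Corollary~\ref{epsilon.1} requires $\varphi\in(0,\frac{\pi}{2\varepsilon})=(0,\frac{\pi}{2})$, which is exactly what Step~1 delivers. Hence Corollary~\ref{epsilon.1} applies to problem \eqref{eq.ex.2}: for every $f\in C^\theta([0,T];C_0(\Omega))$ there is a unique solution $u$ of \eqref{eq.ex.2} (which, after the identification $\mathring C^1$ vs.\ $C^1$ of the differentiation operator in Example~\ref{d.2}, is the correct functional-analytic formulation of the PDE with homogeneous initial data) satisfying $u,\dot u,\ddot u,B_\infty\dot u,B_\infty u\in C^\theta([0,T];C_0(\Omega))$ in the sense of Theorem~\ref{main2}.

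\emph{Step 3 (Translation into the stated regularity).} Finally I would unwind the abstract conclusion into the form stated in the corollary. From $u,\dot u\in C^\theta$ and $B_\infty u,B_\infty\dot u\in C^\theta$ one reads off $u\in C^{1,\theta}([0,T];\dom{B_\infty})$; from $\ddot u\in C^\theta([0,T];C_0(\Omega))$ one gets $u\in C^{2,\theta}([0,T];C_0(\Omega))$. That $u$ is a genuine strong solution of the initial-boundary value problem—i.e.\ that $B_\infty u=-\mathcal{A}(x,D)u$ incorporates the boundary condition $u=0$ on $\partial\Omega$ and that $u(0)=\dot u(0)=0$—follows from the definition of $\dom{B_\infty}$ and of the domain $\mathring C^1([0,T];X)$ of $D$, respectively. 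The only genuine point requiring care, and hence the main obstacle, is Step~1: one must be sure that the hypotheses of \cite[Corollary 4.7]{ArBe99} (boundedness and Wiener regularity of $\Omega$, together with the ellipticity and dissipativity assumptions on the coefficients imposed in Example~\ref{ex.1a}) indeed yield not merely generation of an analytic semigroup but sectoriality of angle strictly less than $\frac{\pi}{2}$; everything after that is a routine specialisation of Corollary~\ref{epsilon.1}.
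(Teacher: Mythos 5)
Your argument is exactly the paper's: the authors likewise cite \cite[Corollary 4.7]{ArBe99} to get that $-B_\infty$ generates an analytic semigroup on $C_0(\Omega)$ for $\Omega$ bounded and Wiener regular, deduce sectoriality of angle $<\frac{\pi}{2}$, set $A:=B:=B_\infty$ with $\varepsilon=1$, and apply Corollary~\ref{epsilon.1}. Your Steps 2 and 3 just spell out the routine specialisation that the paper leaves implicit, so the proposal is correct and takes essentially the same route.
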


\end{example}

\begin{example}[Intermediate damping] \label{ex.2}
Let $\Omega\subseteq\R^n$ be an open set. We consider the following initial-boundary value problem:
\begin{equation} \label{eq.ex.3}
\begin{array}{ll}
u_{tt} - \alpha \Delta u_t +\Delta^2 u = f & \text{in } (0,T)\times\Omega , \\[2mm]
u = \Delta u = 0 & \text{in } (0,T)\times\partial\Omega , \\[2mm]
u(0,x)= 0 & \text{in } \Omega , \\[2mm]
u_t (0,x) = 0 & \text{in } \Omega . 
\end{array}
\end{equation}
This problem is in fact a special case of the problem \eqref{eq.1} from Corollary \ref{epsilon.1} if we let $1\leq r\leq \infty$, $B_r$ be the negative Dirichlet-Laplace operator on $X_r$ (see Example \ref{ex.1}), and if we put $A=B_r^2$ and $\varepsilon = \frac12$. Then $A$ is still sectorial with angle $\varphi =0$ if $1\leq r<\infty$ and $\varphi\in (0,\pi )$ if $r=\infty$. Moreover, $B_r = A^\frac12$, and we obtain the following two corollaries.

\begin{corollary} \label{ex.3.cor.1}
Fix $\theta\in (0,1)$ $1 \leq p$, $q\leq \infty$, and $1\leq r<\infty$. Assume that $\alpha >0$. Then for every $f\in B^{\theta}_{pq} (0,T; L^r (\Omega ))$ the problem \eqref{eq.ex.3} admits a unique strong solution
\[
 u\in B^\theta_{pq} (0,T; \dom{B_r^2} )\cap B^{1+\theta }_{pq} (0,T ; \dom{B_r} ) \cap B^{2+\theta}_{pq} (0,T; L^r (\Omega )) .
\]
\end{corollary}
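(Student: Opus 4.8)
The plan is to recognise \eqref{eq.ex.3} as a special case of the abstract problem \eqref{eq.1} treated in Corollary \ref{epsilon.1} and then to unpack the abstract conclusion into the stated anisotropic Besov regularity. Concretely, I would work on $X := X_r = L^r(\Omega)$ and put $A := B_r^2$ with its natural domain $\dom{B_r^2} = \{u\in\dom{B_r} : B_r u\in\dom{B_r}\}$. Since $B_r$ is sectorial, multiplicativity of the functional calculus gives $A^{1/2} = B_r$, so that $\alpha A^{1/2}\dot u$ is exactly $-\alpha\Delta\dot u$ in the Dirichlet sense while $Au = B_r^2 u$ is $\Delta^2 u$, the two boundary conditions $u = \Delta u = 0$ on $\partial\Omega$ being encoded in $\dom{A}$ and the single condition $u|_{\partial\Omega} = 0$ being encoded in $\dom{A^{1/2}} = \dom{B_r}$. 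Thus \eqref{eq.ex.3} is precisely \eqref{eq.1} for this $A$, with $\varepsilon = \tfrac12$ and damping coefficient $\alpha$.

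Next I would verify the hypotheses of Corollary \ref{epsilon.1}. By Example \ref{ex.1}, for $1\le r<\infty$ the operator $B_r$ is sectorial of angle $0$ on $X_r$, hence so is $A = B_r^2$; in particular $A$ is sectorial of every angle $\varphi\in(0,\pi)$, so $\varphi$ may be taken arbitrarily small. Consequently, for any $\alpha>0$ one of the admissible cases of Corollary \ref{epsilon.1} holds with $\varepsilon = \tfrac12$: case (a) if $\alpha\ge 2$, and case (b) if $\alpha\in(0,2)$, since then $\pi - 2\arctan\frac{\sqrt{4-\alpha^2}}{\alpha}>0$ and $\varphi$ can be chosen below this threshold. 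Hence Corollary \ref{epsilon.1}, and with it Theorem \ref{main2}, applies to \eqref{eq.ex.3}.

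It then remains to translate the conclusion of Theorem \ref{main2}: for every $f\in B^{\theta}_{pq}(0,T;X_r)$ (to be read as $\mathring B^{\theta}_{pq}$ when $\theta\ge\frac1p$, and as the corresponding H\"older space when $p=\infty$) there is a unique $u$ with $u$, $\dot u$, $\ddot u$, $\alpha B_r\dot u$ and $B_r^2 u$ all in $B^{\theta}_{pq}(0,T;X_r)$. Closedness of $B_r^2$ turns $u, B_r^2 u\in B^{\theta}_{pq}(0,T;X_r)$ into $u\in B^{\theta}_{pq}(0,T;\dom{B_r^2})$, and the continuous inclusion $\dom{B_r^2}\hookrightarrow\dom{B_r}$ into $u\in B^{\theta}_{pq}(0,T;\dom{B_r})$; combining this with $\dot u\in B^{\theta}_{pq}(0,T;\dom{B_r})$ (which follows from $\dot u, B_r\dot u\in B^{\theta}_{pq}(0,T;X_r)$) and the standard identification $B^{1+\theta}_{pq}(0,T;Y) = \{v : v, \dot v\in B^{\theta}_{pq}(0,T;Y)\}$ with $Y=\dom{B_r}$ gives $u\in B^{1+\theta}_{pq}(0,T;\dom{B_r})$; and $u, \dot u, \ddot u\in B^{\theta}_{pq}(0,T;X_r)$ gives $u\in B^{2+\theta}_{pq}(0,T;X_r)$. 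Uniqueness is inherited from Theorem \ref{main2}. I do not expect a genuine obstacle here: the interpolation-theoretic substance is all contained in Theorem \ref{main}, and the only points needing attention are the identity $(B_r^2)^{1/2} = B_r$, the correct encoding of both boundary conditions in $\dom{B_r^2}$, and the sectoriality of $B_r$ (hence of $B_r^2$) at the non-reflexive endpoint $r = 1$ — the latter already furnished by Example \ref{ex.1}.
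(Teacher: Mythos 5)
Your proposal is correct and follows essentially the same route as the paper: the paper likewise treats \eqref{eq.ex.3} as the special case of \eqref{eq.1} with $A=B_r^2$, $\varepsilon=\tfrac12$, $B_r=A^{1/2}$, noting that $A$ is sectorial of arbitrarily small angle for $1\le r<\infty$ so that case (a) or (b) of Corollary \ref{epsilon.1} applies for any $\alpha>0$. Your additional unpacking of the conclusion into the anisotropic Besov spaces is a correct elaboration of what the paper leaves implicit.
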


\begin{corollary} \label{ex.3.cor.2}
Assume $\alpha \geq 2$, that $\Omega$ is open and Wiener regular, and fix $\theta\in (0,1)$. Then for every $f\in C^\theta ([0,T] ; C_0 (\Omega ))$ the problem \eqref{eq.ex.3} admits a unique strong solution
\[
 u\in C^{\theta} ([0,T];\dom{B_\infty^2} ) \cap C^{1,\theta} ([0,T];\dom{B_\infty} ) \cap C^{2,\theta} ([0,T] ; C_0 (\Omega )) .
\]
\end{corollary}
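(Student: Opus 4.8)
The plan is to recognise \eqref{eq.ex.3} as the special case of the abstract equation \eqref{eq.1} already indicated in the text preceding the corollary, and then to invoke case~(a) of Corollary~\ref{epsilon.1}. I would work on the Banach space $X = X_\infty = C_0(\Omega)$ with $B_\infty$ the realisation of the negative Dirichlet--Laplace operator on $C_0(\Omega)$ introduced in Example~\ref{ex.1}; since $\Omega$ is Wiener regular, $-B_\infty$ generates an analytic semigroup by \cite[Theorem 1.1]{ArBe99}, so $B_\infty$ is sectorial of some angle $\varphi_{B_\infty} < \tfrac{\pi}{2}$. Putting $A := B_\infty^2$ and $\varepsilon := \tfrac12$, and using $-\Delta \dot u = B_\infty \dot u$ and $\Delta^2 u = B_\infty^2 u$, the problem \eqref{eq.ex.3} takes the form
\[
\ddot u + \alpha A^{\varepsilon} \dot u + A u = f , \qquad u(0) = \dot u(0) = 0 ,
\]
which is exactly \eqref{eq.1} once one knows that $A^{1/2} = B_\infty$.

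I would then verify the hypotheses of Corollary~\ref{epsilon.1}. First, $A = B_\infty^2$ is sectorial of angle $2\varphi_{B_\infty} < \pi$: factoring $\mu - \lambda^2 = (\sqrt{\mu} - \lambda)(\sqrt{\mu} + \lambda)$ with the principal square root, for $\mu$ outside a sector $\Sigma_{2\varphi'}$ with $\varphi_{B_\infty} < \varphi' < \tfrac{\pi}{2}$ both $\sqrt{\mu}$ and $-\sqrt{\mu}$ lie outside $\Sigma_{\varphi'}$, hence $(\mu - B_\infty^2)^{-1} = (\sqrt{\mu} + B_\infty)^{-1}(\sqrt{\mu} - B_\infty)^{-1}$ exists and is $O(|\mu|^{-1})$; together with $\sigma(B_\infty^2) \subseteq \Sigma_{2\varphi_{B_\infty}}$ this gives sectoriality of an angle lying in $(0,\pi)$. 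Second, $(B_\infty^2)^{1/2} = B_\infty^{2\cdot\frac12} = B_\infty$ by the composition rule for fractional powers of a sectorial operator \cite{Hs06}. Since $\varepsilon = \tfrac12$, $\alpha \geq 2$ and the sectoriality angle of $A$ lies in $(0,\pi)$, hypothesis~(a) of Corollary~\ref{epsilon.1} is satisfied, so \eqref{eq.ex.3} has Besov and H\"older maximal regularity in the sense of Theorem~\ref{main2}.

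It then remains to restate the abstract H\"older conclusion in the claimed form. By Theorem~\ref{main2}, for $f \in C^\theta([0,T]; C_0(\Omega))$ the solution $u$ satisfies $u, \dot u, \ddot u, \alpha B_\infty \dot u, B_\infty^2 u \in C^\theta([0,T]; C_0(\Omega))$, using $B = \alpha A^{1/2} = \alpha B_\infty$ and $A = B_\infty^2$. From $u, B_\infty^2 u \in C^\theta([0,T]; C_0(\Omega))$ we get $u \in C^\theta([0,T]; \dom{B_\infty^2})$; from $\dot u, B_\infty \dot u \in C^\theta([0,T]; C_0(\Omega))$ and the closedness of $B_\infty$ we get that $t \mapsto B_\infty u(t)$ is $C^1$ with derivative $B_\infty \dot u$, hence $u \in C^{1,\theta}([0,T]; \dom{B_\infty})$; and $\ddot u \in C^\theta([0,T]; C_0(\Omega))$ gives $u \in C^{2,\theta}([0,T]; C_0(\Omega))$. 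In particular $u$ is a strong solution of \eqref{eq.ex.3}, and uniqueness is part of the conclusion of Theorem~\ref{main2}.

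The argument uses nothing beyond what has already been established --- the sectoriality of $B_\infty$ on $C_0(\Omega)$ for Wiener regular $\Omega$ (exactly as in Example~\ref{ex.1}) and Corollary~\ref{epsilon.1}. The only point requiring a little care is the bookkeeping of angles: since $\varphi_{B_\infty}$ may be any number below $\tfrac{\pi}{2}$, the angle $2\varphi_{B_\infty}$ of $A = B_\infty^2$ can be arbitrarily close to $\pi$, so cases~(b) and (c) of Corollary~\ref{epsilon.1}, which restrict that angle, are not available uniformly; this is precisely why the hypothesis $\alpha \geq 2$, placing us in case~(a) (which allows any angle in $(0,\pi)$), is imposed. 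The passage from $\mathring{C}^\theta$ data, as literally produced by Theorem~\ref{main2}, to general $C^\theta$ data is the routine reduction already used in Corollaries~\ref{ex.1.cor.2} and \ref{ex.2.cor.2}.
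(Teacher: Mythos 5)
Your proposal is correct and follows essentially the same route as the paper, which proves this corollary implicitly through the discussion in Example \ref{ex.2}: set $A=B_\infty^2$, $\varepsilon=\tfrac12$, note that $A$ is sectorial of angle in $(0,\pi)$ with $A^{1/2}=B_\infty$, and invoke case (a) of Corollary \ref{epsilon.1} (hence the hypothesis $\alpha\geq 2$). Your additional verifications — the sectoriality of $B_\infty^2$ via factoring the resolvent, the composition rule for fractional powers, and the translation of the abstract H\"older conclusion into the stated function spaces — merely make explicit what the paper leaves to the reader.
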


\end{example}

\def\cprime{$'$} 

\providecommand{\bysame}{\leavevmode\hbox to3em{\hrulefill}\thinspace}
\providecommand{\MR}{\relax\ifhmode\unskip\space\fi MR }
\providecommand{\MRhref}[2]{%
  \href{http://www.ams.org/mathscinet-getitem?mr=#1}{#2}
}
\providecommand{\href}[2]{#2}

\vskip 4 mm

\end{document}